%
%
%
\pdfpagewidth=8.5truein
\pdfpageheight=11truein
%

\documentclass[11pt]{article}
\usepackage{amssymb,amsmath,amsthm}
\usepackage{graphicx}
\usepackage{pgf,tikz}

\usepackage{enumerate}

\setlength{\topmargin}{ -1.5cm}
\setlength{\oddsidemargin}{ -0.5cm}
\textwidth 17cm
\textheight 22.4cm

\newtheorem{theorem}{Theorem}
\newtheorem{corollary}[theorem]{Corollary}

\newtheorem{conjecture}[theorem]{Conjecture}

\newtheorem{definition}[theorem]{Definition}

\newtheorem{remark}[theorem]{Remark}

\newcommand{\vanish}[1]{}

\begin{document}
\title{Neighborhood-Prime Labelings of Trees and Other Classes of Graphs}

\author{
Malori Cloys\\
{\small Austin Peay State University}\\
{\small mcloys@my.apsu.edu}\\
\\
N. Bradley Fox\\
{\small Austin Peay State University}\\
{\small foxb@apsu.edu}\\
}

\date{}
\maketitle

\begin{abstract}
A neighborhood-prime labeling of a graph is a variation of a prime labeling in which the vertices are assigned labels from $1$ to $|V(G)|$ such that the $\gcd$ of the labels in the neighborhood of each non-degree $1$ vertex is equal to $1$.  In this paper, we examine neighborhood-prime labelings of several classes of graphs such as polygonal snakes and books, with a focus on trees including caterpillars, spiders, and firecrackers.
\end{abstract}

\section{Introduction}
The concept of a neighborhood-prime labeling of a graph is a variation of a prime labeling, which was developed by Roger Entriger and first introduced in~\cite{Tout} by Tout, Dabboucy, and Howalla.  For a simple graph $G$ with $n$ vertices in the vertex set $V(G)$, a \textit{prime labeling} is an assignment of the integers $1$ to $n$ as labels of the vertices such that each pair of labels from adjacent vertices is relatively prime.  A graph that has such a labeling is called \textit{prime}.  Over the last thirty-five years, prime labelings have been developed or labelings have been shown not to exist on most well-known classes of graphs.  Gallian's dynamic graph labeling survey~\cite{Gallian} contains a detailed list of graphs that have been proven to be prime.

Recently, much focus has been on variations of prime labelings, such as the neighborhood-prime labeling, which involves the neighborhood of a vertex $v$.  This is the set of all vertices in $G$ that are adjacent to $v$ and is denoted by $N(v)$ or $N_G(v)$ when the context of the graph is needed.  A \textit{neighborhood-prime labeling} of a graph $G$ with $n$ vertices is a labeling of the vertex set with the integers $1$ to $n$ in which for each vertex $v\in V(G)$ of degree greater than $1$, the $\gcd$ of the labels of the vertices in $N(v)$ is 1.  This labeling is often represented by a bijective function $f:V(G)\rightarrow\{1,2,\ldots, n\}$ with $\gcd \{f(u): u\in N(v)\}=1$ for all non-degree $1$ vertices.  For ease of notation, we will use $f(N(v))$ to denote $\{f(u): u\in N(v)\}$.  A graph which admits a neighborhood-prime labeling is called \textit{neighborhood-prime}.

Patel and Shrimali developed the notion of a neighborhood-prime labeling, first introduced in~\cite{Patel_Shrimali1}, where they proved all paths, complete graphs, wheels, helms, and flowers are neighborhood-prime.  Additionally, a cycle $C_n$ was shown to be neighborhood-prime for all $n\not\equiv 2 \pmod{4}$.  Patel and Shrimali~\cite{Patel_Shrimali2, Patel_Shrimali3} have investigated other classes of graphs in subsequent papers, including grid graphs and unions of paths and wheels being neighborhood-prime, along with the union of cycles given certain restrictions on their lengths.  Other classes of graphs shown to be neighborhood-prime include the friendship graph~\cite{A_N}, bistar~\cite{A_M}, and crown graph~\cite{K_V}.

\section{Graphs with Cycles}

We begin our examination of neighborhood-prime labelings with the familiar classes of gear and snake graphs. Ananthavalli and Nagarajan claimed in~\cite{A_N} that gear graphs and triangular and quadrilateral snakes are neighborhood-prime.  However, their labelings are not neighborhood-prime in many cases; see for instance the neighborhoods of the vertices with odd indices in Theorem 2.2 for any size $n$, or the neighborhood of $v_{2n+1}$ in Theorem 2.4 for any length $n$.  We first provide correct labelings for these graphs, and then expand upon the study of polygonal snakes.  Our first graph, the \textit{gear graph} $G_n$, is obtained from a wheel graph $W_n = C_n + K_1$ by adding a vertex between each pair of adjacent vertices on the cycle.  See Figure~\ref{gear} to see a neighborhood-prime labeling for the gear graph with $n=7$.

\begin{theorem}\label{gear_graph}
The gear graph $G_n$ is neighborhood-prime for every $n$.
\end{theorem}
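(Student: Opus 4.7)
My plan is to place the label $1$ at the hub of $G_n$. Every original cycle vertex is then adjacent to the hub, so its neighborhood contains $1$ and the gcd condition there is immediate. This reduces the verification to two constraints: the gcd of the cycle-vertex labels (which form the hub's neighborhood) must equal $1$, and at each degree-$2$ subdivision vertex the two flanking cycle-vertex labels must be coprime.

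I would label the cycle vertices $u_1,\ldots,u_n$, listed in cyclic order, by $f(u_i)=i+1$, and distribute the remaining labels $n+2,\ldots,2n+1$ arbitrarily among the subdivision vertices. Every pair of cyclically consecutive cycle labels is then a pair of consecutive integers, hence coprime, so the subdivision-vertex conditions hold along the entire chain. The only potentially problematic pair is the wrap-around $(f(u_n),f(u_1))=(n+1,2)$; when $n$ is even, $n+1$ is odd and this pair is coprime, so the labeling is complete. When $n$ is odd I would fix the wrap-around by a single swap: set $f(u_n)=n+2$ and push the displaced label $n+1$ onto any subdivision vertex. The two affected cyclic pairs $(n,n+2)$ and $(n+2,2)$ are coprime because $n$ and $n+2$ are odd. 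In either parity the labels $2$ and $3$ appear on cycle vertices, so the gcd over the hub's neighborhood divides $\gcd(2,3)=1$.

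The only real obstacle is the parity conflict at the wrap-around edge of the outer cycle when $n$ is odd; everything else is driven by two trivial observations, that consecutive integers are coprime and that any neighborhood containing $1$ has gcd $1$. I expect the write-up to consist of the explicit labeling followed by a short case check, with the odd-$n$ swap as the only non-obvious ingredient.
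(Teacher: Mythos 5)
Your proof is correct and follows essentially the same strategy as the paper's: place $1$ at the hub so every spoke vertex's condition is automatic, label the outer cycle vertices with an arithmetic progression in cyclic order so that each subdivision vertex sees a coprime pair, and repair the single wrap-around pair with one swap in the bad case. The only difference is cosmetic --- you use consecutive integers $2,\ldots,n+1$ on the spokes (so the case split is on the parity of $n$), while the paper labels them with consecutive odd integers $3,5,\ldots,2n+1$ (so its case split is on $n \bmod 3$); your version also makes explicit the useful observation that the subdivision-vertex labels are entirely irrelevant.
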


\begin{proof}
	Consider the graph $G_n$ with $v_1$ as the center vertex and $v_2,\ldots,v_{2n+1}$ as the cycle vertices in which $v_{2i+1}$ is adjacent to the center for each $i=1,\ldots,n$.
\smallskip

First, we assume $n=3k$ or $n=3k+2$ for some positive integer $k$.  We define a labeling for the graph simply by $f(v_i)=i$ for all $i$.  Let $v\in V(G_n)$.  We need to show $\gcd\{f(N(v))\}=1$, which we do in four distinct cases.
\medskip

\noindent\textbf{Case 1}: If $v=v_1$, then we see that  $N(v_1)=\{v_3,v_5,\ldots,v_{2n+1}\}$. Therefore, $\gcd\{f(N(v_1))\}=\gcd\{3,5,\ldots,2n+1\}=1$ since the neighborhood of the vertex $v_1$ contains consecutive odd integers.
	
\medskip
	
\noindent\textbf{Case 2}: For $v=v_{2i}$ with $i=2,\ldots,n$, we see that $N(v_i)=\{v_{2i-1},v_{2i+1}\}$. Clearly we have $\gcd\{2i-1,2i+1\}=1$ for vertices in this case since each neighborhood consists of only vertices labeled by consecutive odd integers.
	
\medskip
\noindent\textbf{Case 3}: If $v=v_{2i+1}$ for $i=1,\ldots, n$, then we see that $v_1\in N(v_{2i+1})$. Therefore, for the odd-indexed vertices, we have $\gcd\{f(N(v_{2i+1}))\}=1$.

\medskip
\noindent\textbf{Case 4}: Lastly, if $v=v_2$, then we see that $N(v_2)=\{v_3, v_{2n+1}\}$.   When $n=3k$, we have $2n+1=6k+1$, and when $n=3k+2$, we obtain $2n+1 = 6k+5$.  One can see that in either case, $2n+1$ is not a multiple of 3, thus we have $\gcd\{3, 2n+1\}=1$.

\bigskip 
Now we consider when $n=3k+1$.  We assign labels as before with $f(v_i)=i$ except for the final two odd-indexed vertices, in which we set $f(v_{2n-1})=2n+1$ and $f(v_{2n+1})=2n-1$.  Let $v\in V(G_n)$.  The first three cases follow analogously to when $n=3k$ or $3k+2$ if we limit Case 2 to only $i=2$ to $n-2$, but this time we need three other cases to complete the proof.

\medskip 
\noindent\textbf{Case 4}: When $v=v_2$, we see that $N(v_2)=\{v_3, v_{2n+1}\}$ as before and therefore $\gcd\{f(N(v_2))\}=\gcd\{3,2n-1\}=1$ since $3$ does not divide $2n-1=6k+1$ in this case. 

\medskip

\noindent\textbf{Case 5}: For $v=v_{2n}$, we have $N(v_{2n})=\{v_{2n-1},v_{2n+1}\}$. Thus $\gcd\{2i+1,2i-1\}=1$ since these labels are consecutive odd integers.

\medskip
\noindent\textbf{Case 6}: If $v=v_{2n-2}$, then $N(v_{2n-2})=\{v_{2n-3},v_{2n-1}\}$.  We see that $\gcd\{2n-3,2n+1\}=1$ because all odd integers that differ by $4$ are relatively prime.

\medskip 
We have shown for all vertices $v\in G_n$ in each case of $n=3k$, $3k+1,$ and $3k+2$, that $\gcd\{f(N(v))\}=1$, making the gear graph neighborhood-prime.

\end{proof}

\begin{figure}
\begin{center}
\includegraphics[scale=1]{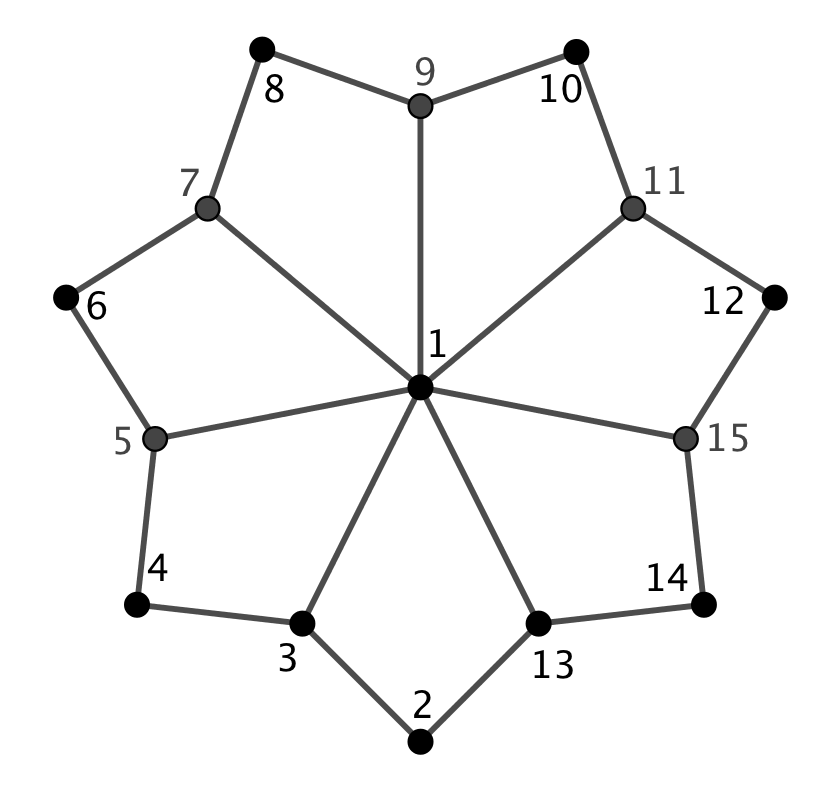}
\caption{Neighborhood-prime labeling of the gear graph $G_7$}\label{gear}
\end{center}
\end{figure}

We now investigate snake graphs, which we will denoted by $S_{k,n}$, where $k$ is the length of the cycles attached along a path of $n$ vertices.  We begin with the \textit{triangular snake}, $S_{3,n}$, which is denoted in other works as $T_n$.  This graph consists of a path $P_n$ with a vertex attached to the pairs of adjacent vertices from the path, resulting in $n-1$ triangles whose bases form a path.  An example of a triangular snake with a neighborhood-prime labeling can be observed in Figure~\ref{triangular}.

\begin{theorem}\label{triangular_snake}
The triangular snake admits a neighborhood-prime labeling for any length $n$.
\end{theorem}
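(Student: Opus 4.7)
The plan is to propose a single natural labeling, observe where it fails, and patch that single failure. Denote the path vertices by $u_1, \ldots, u_n$ and the triangle apex vertices by $w_1, \ldots, w_{n-1}$, with $w_i$ adjacent to both $u_i$ and $u_{i+1}$. Every vertex has degree at least $2$, so the gcd condition must be checked at each vertex.

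First I would try the labeling $f(u_i) = i$ for $1 \le i \le n$ and $f(w_i) = n + i$ for $1 \le i \le n-1$, then verify each vertex type. Each apex $w_i$ has neighborhood labeled $\{i, i+1\}$, so its gcd is $1$. Each interior path vertex $u_i$ with $2 \le i \le n-1$ has neighborhood $\{i-1,\, i+1,\, n+i-1,\, n+i\}$, which contains the consecutive integers $n+i-1$ and $n+i$, again forcing gcd $1$. The endpoint $u_n$ has neighborhood $\{n-1,\, 2n-1\}$, and $\gcd(n-1, 2n-1) = \gcd(n-1, 1) = 1$. The only vertex that can fail is $u_1$, whose neighborhood $\{2,\, n+1\}$ has gcd $1$ exactly when $n$ is even, which settles that parity at once.

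When $n$ is odd, $u_1$'s neighborhood $\{2, n+1\}$ has both entries even. I would repair this by transposing the labels at $u_1$ and $u_2$, setting $f(u_1) = 2$ and $f(u_2) = 1$ with every other label unchanged. After the swap, $u_1$'s neighborhood becomes $\{1, n+1\}$; the only other neighborhoods affected are those of $u_2$, $u_3$, $w_1$, and $w_2$, and each of these either still contains the consecutive apex pair $n+i-1, n+i$ or picks up the new label $1$, so its gcd remains $1$. The main obstacle is really just diagnostic: one must notice that the entire obstruction in the natural labeling lives at a single vertex and that a local transposition resolves it without disturbing the consecutive-integer trick used elsewhere. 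The resulting two-case verification (for $n$ even and $n$ odd, with the four vertex types enumerated in each) is then a routine check.
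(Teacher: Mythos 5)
Your proposal is correct: the even-$n$ verification is sound, and the transposition of the labels at $u_1$ and $u_2$ in the odd case repairs the only failing neighborhood while every affected vertex ($u_1$, $u_2$, $u_3$, $w_1$, $w_2$) either sees the label $1$ or retains a consecutive pair; I checked the small cases $n=2,3$ as well. The paper reaches the same conclusion by a cleaner choice of labeling rather than a different proof technique: it assigns $f(u_i)=2i-1$ to the path vertices and $f(v_i)=2i$ to the apexes, interleaving odds and evens so that every apex sees consecutive odd integers and every path vertex sees a consecutive integer pair ($2i$ and $2i+1$, or $2n-3$ and $2n-2$ at the far end). That labeling works uniformly for all $n$ with no parity split and no patching, so the paper's argument is three short cases with no exceptional vertex. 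Your block labeling ($1,\dots,n$ on the path, $n+1,\dots,2n-1$ on the apexes) buys nothing extra here and costs you the case analysis, but it is a legitimate alternative and the diagnostic-and-patch reasoning is carried out correctly.
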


\begin{proof}
We call the $n$ vertices on the base path $u_1,\ldots,u_n$ and the vertex attached to form triangles between each pair of vertices on the path $v_i$ for each $i=1,\ldots,n-1$. We assign $f(u_i)=2i-1$ for $i=1,\ldots,n$ and $f(v_i)=2i$ for $i=1,\ldots,n-1$. 

Let $v\in V(T_n)$. If $v=v_i$ for any $i$, then $N(v_i)=\{u_i, u_{i+1}\}$, which are labeled with consecutive odd integers. If $v=u_i$ for $i=1,\ldots,n-1$, then $f(v_i)=2i$ and $f(u_{i+1})=2i+1$, resulting in consecutive integer labels in its neighborhood. Lastly, when $v=u_n$, the labels in its neighborhood are again consecutive integers with $f(v_{n-1})=2n-2\in f(N(u_n))$ and $f(u_{n-1})=2n-3\in f(N(u_n))$. In all three cases, $gcd\{f(N(v))\}=1$.

\end{proof}

\begin{figure}
\begin{center}
\includegraphics[scale=1]{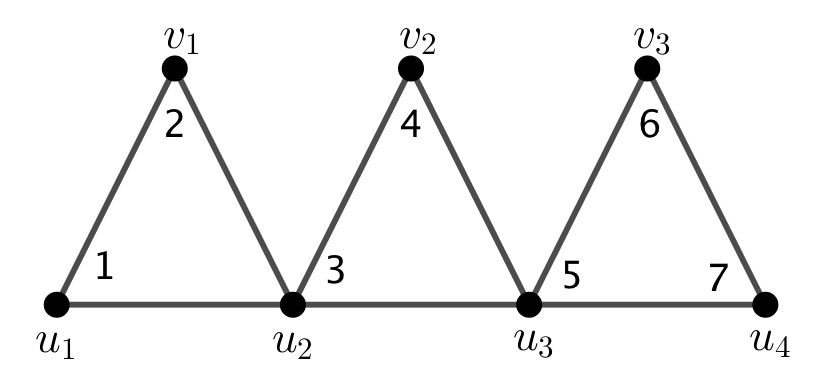}
\includegraphics[scale=1]{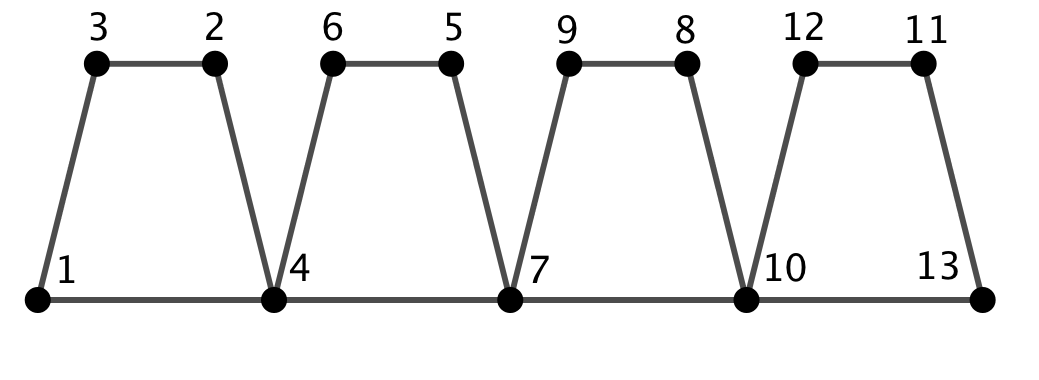}
\caption{Neighborhood-prime labeling of the triangular snake $S_{3,4}$ and quadrilateral snake $S_{4,5}$}\label{triangular}
\end{center}
\end{figure}

The \textit{quadrilateral snake} $S_{4,n}$, or denoted in~\cite{A_N} by $Q_n$, is the graph resulting from a path $u_1,\ldots,u_n$ by attaching a path with four vertices $u_i,v_i,w_i,u_{i+1}$ between each pair of adjacent vertices on the path.  See Figure~\ref{triangular} for an example of $S_{4,5}$.

\begin{theorem}\label{quad_snake}
	The quadrilateral snake is neighborhood-prime for all $n$.
\end{theorem}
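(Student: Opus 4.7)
The plan is to imitate the strategy used for the triangular snake in Theorem \ref{triangular_snake}: exhibit an explicit labeling given by a closed-form formula indexed along the base path, then verify the neighborhood-gcd condition by a short case analysis on vertex type. Following the paper's notation, I would label the base-path vertices $u_1, \ldots, u_n$ and, for each $i = 1, \ldots, n-1$, call the intermediate vertices of the $i$-th attached path $v_i$ and $w_i$, so that $S_{4,n}$ has $3n-2$ vertices with five kinds of neighborhoods: the two endpoints $u_1$ and $u_n$, the interior $u_i$ for $2 \le i \le n-1$, the $v_i$, and the $w_i$.

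The labeling I would try is $f(u_i) = 3i - 2$, $f(w_i) = 3i - 1$, and $f(v_i) = 3i$. Since these three families occupy the three residue classes modulo $3$ within $\{1, 2, \ldots, 3n-2\}$, bijectivity is immediate by a quick count. The key design feature is that for every $i$ the two pairs $(u_i, w_i)$ and $(v_i, u_{i+1})$ receive consecutive-integer labels; this forces the gcd in the degree-$2$ neighborhoods $N(v_i) = \{u_i, w_i\}$ and $N(w_i) = \{v_i, u_{i+1}\}$, as well as the endpoint neighborhoods $N(u_1) = \{u_2, v_1\}$ and $N(u_n) = \{u_{n-1}, w_{n-1}\}$, to equal $1$ by the coprimality of consecutive integers. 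For the interior case $N(u_i) = \{u_{i-1}, u_{i+1}, v_i, w_{i-1}\}$, the same observation applies since this set contains $f(v_i) = 3i$ and $f(u_{i+1}) = 3i+1$.

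I do not anticipate a substantive obstacle: the only non-trivial step is choosing the formula so that every neighborhood contains a pair of consecutive integers, and once that is arranged the verification collapses to the single fact that consecutive integers are coprime, exactly parallel to the triangular-snake argument.
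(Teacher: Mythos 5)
Your proposal uses exactly the labeling in the paper ($f(u_i)=3i-2$, $f(w_i)=3i-1$, $f(v_i)=3i$) and the same justification, namely that every neighborhood contains a pair of consecutive integer labels; the only difference is that you spell out the case check that the paper leaves as "easily verified." The proof is correct and essentially identical to the paper's.
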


\begin{proof}
	We label the quadrilateral snake $S_{4,n}$ as follows:
\begin{align*}
f(u_i)&=3i-2 \text{\hspace{.2cm} for } i=1,\ldots, n\\
f(v_i)&=3i \text{ \hspace{.875cm}for } i=1,\ldots, n-1\\
f(w_i)&=3i-1 \text{\hspace{.2cm} for } i=1,\ldots, n-1.
\end{align*}
One can easily verify that this labeling is neighborhood-prime since the neighborhood of each vertex $v\in V(S_{4,n})$ contains vertices with consecutive integer labels.
\end{proof}

While the triangular and quadrilateral snake have been previously studied, although with incorrectly developed labelings, the \textit{pentagonal snake} $S_{5,n}$ has not been previously investigated.  It is similarly obtained from a path $u_1,\ldots,u_n$ by attaching path between the vertices $u_i,v_i,w_i,x_i,u_{i+1}$ between each pair of adjacent vertices on the path $P_n$.  An example can be seen in Figure~\ref{starmngon}.

\begin{theorem}\label{pentagon_snake}
	The pentagonal snake $S_{5,n}$ has a neighborhood-prime labeling for any length $n$.
\end{theorem}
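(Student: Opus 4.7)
The plan is to partition the label set $\{1, 2, \ldots, 4n-3\}$ into three groups: the odd integers $1, 3, \ldots, 2n-1$ for the path vertices $u_i$, the small even integers $2, 4, \ldots, 2n-2$ for the central pentagon vertices $w_i$, and the remaining labels $\{2n, 2n+1, \ldots, 4n-3\}$, grouped into $n-1$ consecutive pairs, for the side vertices $v_i$ and $x_i$. Concretely, I would set $f(u_i) = 2i-1$ for $i = 1, \ldots, n$ and $f(w_i) = 2i$ for $i = 1, \ldots, n-1$, and assign the pair $\{2n+2i-2,\, 2n+2i-1\}$ to $\{v_i, x_i\}$ for each $i$, with the internal order of two distinguished pairs to be specified below.

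Most of the neighborhood conditions are then automatic. The degree-$2$ vertex $v_i$ has neighbors $u_i, w_i$ with the consecutive labels $2i-1$ and $2i$; the vertex $x_i$ has neighbors $w_i, u_{i+1}$ with the consecutive labels $2i$ and $2i+1$; and $w_i$ has neighbors $v_i, x_i$, whose labels form a pair of consecutive integers by construction. For any interior path vertex $u_i$ with $1 < i < n$, the neighborhood contains $u_{i-1}$ and $u_{i+1}$ with the odd labels $2i-3$ and $2i+1$; since $2i-3$ is odd, $\gcd(2i-3, 2i+1) = \gcd(2i-3, 4) = 1$, so the gcd over the full neighborhood (which also includes $f(v_i)$ and $f(x_{i-1})$) is $1$.

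Only the endpoints $u_1$ and $u_n$ remain, and the order within two specific pairs can be chosen to handle them. At $u_1$, the neighborhood is $\{v_1, u_2\}$ with labels $f(v_1) \in \{2n, 2n+1\}$ and $3$, so we need $3 \nmid f(v_1)$; since $2n$ and $2n+1$ are consecutive, at most one is divisible by $3$, and $f(v_1)$ can be chosen as the other. At $u_n$, the neighborhood is $\{u_{n-1}, x_{n-1}\}$ with labels $2n-3$ and $f(x_{n-1}) \in \{4n-4, 4n-3\}$; I would always set $f(x_{n-1}) = 4n-4$, giving $\gcd(2n-3, 4n-4) = \gcd(2n-3, 2) = 1$ since $2n-3$ is odd. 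For $n \geq 3$ these two adjustments involve disjoint pairs and so do not interfere; for $n = 2$ the whole graph is a single pentagon and can be labeled by direct inspection.

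The main, and only mild, obstacle is the modular consideration at $u_1$: one must confirm that for each residue of $n \pmod 3$ one of the two candidates in $\{2n, 2n+1\}$ is coprime to $3$, and that the resulting choice does not disturb any other condition. This is resolved by noting that swapping $v_i$ and $x_i$ within a pair preserves the fact that the pair consists of consecutive integers, so $w_i$'s gcd condition is unaffected, and such a swap does not change the label multiset of any other neighborhood.
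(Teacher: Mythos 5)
Your labeling is correct and verifiably neighborhood-prime, but it is a genuinely different construction from the paper's. The paper labels pentagon-by-pentagon in blocks of four consecutive integers ($f(u_i)=4i-3$, $f(v_i)=4i-1$, $f(w_i)=4i$, $f(x_i)=4i-2$), which makes every neighborhood consist of consecutive or nearly consecutive labels; the price is that the two neighbors of the degree-$2$ vertex $v_i$ get labels $4i-3$ and $4i$, which are both multiples of $3$ exactly when $3\mid i$, forcing a swap of the $u_i$ and $v_i$ labels at every $i\equiv 0\pmod 3$ and a six-case verification. You instead partition the label set globally: odd labels $1,3,\ldots,2n-1$ on the base path, small evens on the centers $w_i$, and the top range in consecutive pairs on $\{v_i,x_i\}$. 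This makes every neighborhood condition except those at $u_1$ and $u_n$ hold automatically (consecutive labels at $v_i$, $x_i$, $w_i$; odd labels differing by $4$ at interior $u_i$), and localizes all the delicate work to choosing the order within at most two disjoint pairs, which you handle correctly ($3\nmid f(v_1)$ is achievable since $2n,2n+1$ are consecutive, and $\gcd(2n-3,4n-4)=\gcd(2n-3,2)=1$). Your approach trades the paper's single uniform formula for a cleaner case analysis with fewer exceptional vertices; the one thing to make explicit in a final write-up is the degenerate case $n=2$, where the two distinguished pairs coincide, though as you note $S_{5,2}=C_5$ is handled by inspection (and in fact your general scheme with $f(v_1)=5$, $f(x_1)=4$ already works there since $f(u_1)=1$).
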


\begin{proof}
We begin by labeling the pentagonal snake as follows:
	\begin{align*}
	f(u_i)&=4i-3 \text{\hspace{.2cm} for } i=1,\ldots,n\\
	f(v_i)&=4i-1 \text{\hspace{.2cm} for } i=1,\ldots,n-1\\
	f(w_i)&=4i \text{\hspace{.875cm} for } i=1,\ldots,n-1\\
	f(x_i)&=4i-2 \text{\hspace{.2cm} for } i=1,\ldots,n-1.
	\end{align*}
\medskip

To avoid having multiples of $3$ as the labels of the only two neighboring vertices of each $v_i$ for $i=3k$, we reassign the following labels:
\begin{align*}
	f(u_i)&=4i-1 \text{\hspace{.675cm} for } i=3k \text{ with }i<n\\
	f(v_i)&=4i-3 \text{\hspace{.675cm} for } i=3k. 
	\end{align*}

Let $v\in V(S_{5,n})$.  We consider five cases to demonstrate the equality $\gcd\{f(N(v))\}=1$.

\noindent\textbf{Case 1:} Assume $v=u_1$.  The labels in the neighborhood of $u_1$ are $3$ and $5$, which are relatively prime.
\medskip

\noindent\textbf{Case 2:} Suppose $v=u_i$ for $i=2,\ldots, n$.  The neighborhood of each $u_i$ contains the vertices $u_{i-1}$ and $x_{i-1}$, which are labeled by consecutive integers.  Note that this is true for $u_{3k+1}$ as well despite the reassigned label of $f(u_{3k})$.  Thus, $\gcd\{f(N(u_i))\}=1$. 
\medskip

\noindent\textbf{Case 3:} Suppose $v_i$ is one of the following vertices: $v_i$ with $i=3k$, $w_i$ for any $i=1,\ldots, n-1$, or $x_i$ with $i\neq 3k+2$.  Each of these vertices has exactly two neighbors labeled by consecutive integers, satisfying the coprime neighborhood condition.
\medskip

\noindent\textbf{Case 4:} Next assume $v=v_i$ with $i\neq 3k$.  We have $N(v_i)=\{u_i, w_i\}$, which results in $\gcd\{f(N(v_i))\}=\gcd\{4i-3, 4i\}=1$ since we assumed $i$ is not a multiple of $3$.
\medskip

\noindent\textbf{Case 5:} Finally suppose that $v=x_i$ with $i=3k+2$.  These vertices have a neighborhood of $\{w_i, u_{i+1}\}$.  Since $i+1$ is a multiple of three, the reassigned label would give us $\gcd\{f(N(x_i))\}=\gcd\{4i, 4(i+1)-1\}=\gcd\{4i, 4i+3\}=1$ because of the assumption of $i$ not being a multiple of $3$.
\medskip

We see that $\gcd\{f(N(v))\}=1$ for every vertex $v$, proving that this labeling is neighborhood-prime.
\end{proof}

\begin{figure}
\begin{center}
\includegraphics[scale=.75]{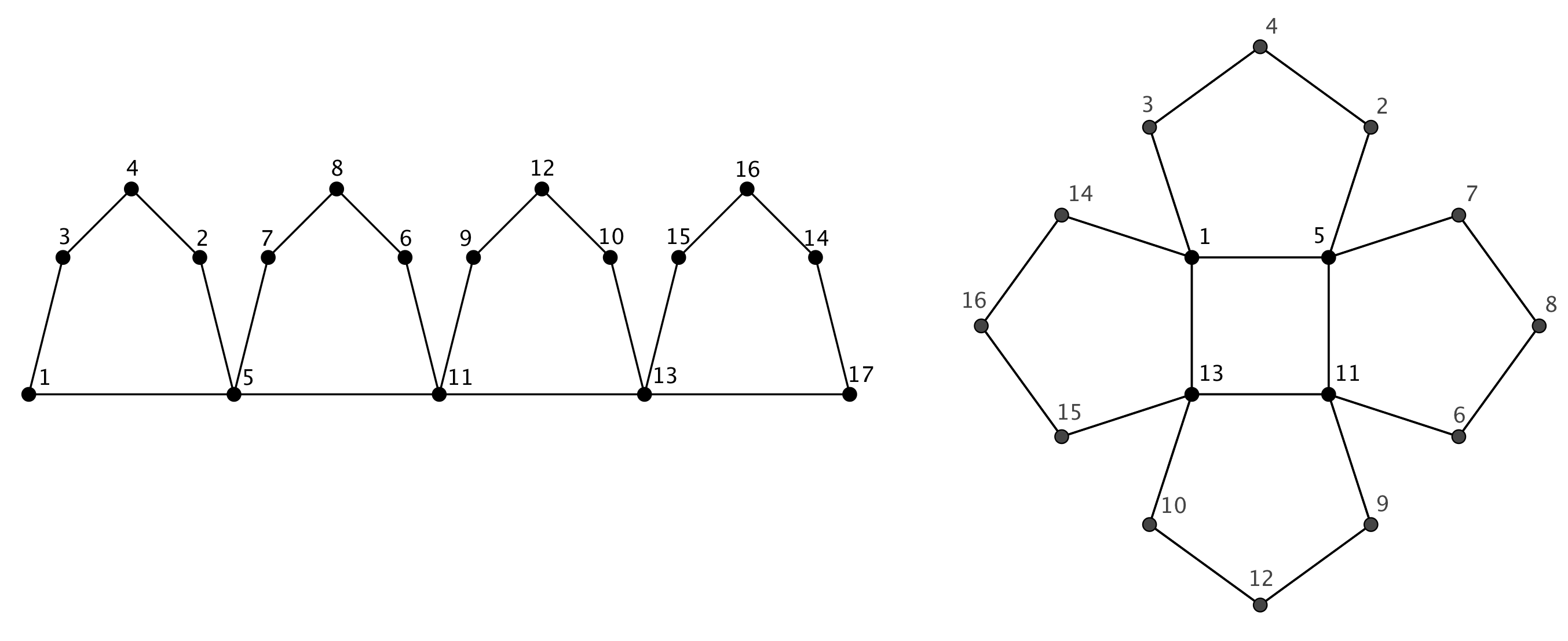}
\caption{Neighborhood-prime labeling of the pentagonal snake with $n=5$ and the star $(5,4)$-gon}\label{starmngon}
\end{center}
\end{figure}

Before investigating more general snake graphs, we first consider the operation of contracting two vertices of a neighborhood-prime graph, particularly when this is applied to triangular, quadrilateral, and pentagonal snakes.  The \textit{contraction} of vertices $u_1$ and $u_2$ of a graph $G$ results in a graph $G'$ in which $u_1$ and $u_2$ are replaced by a vertex $w$ such that $N_{G'}(w)=N_G(u_1)\cup N_G(u_2)$.

\begin{theorem}\label{contraction}
Assume $G$ is a graph of order $n$ with neighborhood-prime labeling $f$, and let $u_1,u_2\in V(G)$ with $f(u_1)=1$ and $f(u_2)=n$.  If $u_1u_2$ is not an edge in $G$ and either $|N(u_1)|>1$ or $|N(u_2)|>1$, then the graph $G'$ resulting from contracting the vertices $u_1$ and $u_2$ into a vertex $w$ has a neighborhood-prime labeling $g$ in which $g(w)=1$ and $g(v)=f(v)$ for all other $v\in V(G')$.
\end{theorem}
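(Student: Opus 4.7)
The plan is to verify directly that $g$ satisfies the neighborhood-prime condition by case-analyzing whether the vertex being checked is $w$ or one of the inherited vertices. First I would confirm that $g$ is a valid bijection from $V(G')$ to $\{1,2,\ldots,n-1\}$: since $f$ uses $\{1,2,\ldots,n\}$ with $f(u_1)=1$ and $f(u_2)=n$, the non-$w$ vertices of $G'$ carry the labels $\{2,3,\ldots,n-1\}$, and assigning $g(w)=1$ completes the bijection. The observation driving the whole argument is that $g(w)=1$, so any vertex adjacent to $w$ in $G'$ automatically has $1$ among the labels of its neighborhood.

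For a vertex $v\in V(G')\setminus\{w\}$ with $\deg_{G'}(v)>1$, I would split into two subcases. If $v$ is adjacent to $w$ in $G'$, then $1\in g(N_{G'}(v))$, so $\gcd g(N_{G'}(v))=1$ immediately. Otherwise $v$ is adjacent in $G$ to neither $u_1$ nor $u_2$, so $N_{G'}(v)=N_G(v)$ with labels unchanged, and since $\deg_G(v)=\deg_{G'}(v)>1$, the neighborhood-prime property of $f$ in $G$ yields $\gcd g(N_{G'}(v))=1$.

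For $v=w$ with $\deg_{G'}(w)>1$, the assumption that $u_1u_2$ is not an edge of $G$ forces $N_{G'}(w)=N_G(u_1)\cup N_G(u_2)$, a set containing neither $u_1$ nor $u_2$, so $g$ and $f$ agree on it. By hypothesis at least one of $u_1,u_2$, say $u_i$, has degree greater than $1$ in $G$, so $\gcd f(N_G(u_i))=1$; since $N_G(u_i)\subseteq N_{G'}(w)$, the gcd over the larger set $N_{G'}(w)$ divides the gcd over $N_G(u_i)$, which is $1$, so $\gcd g(N_{G'}(w))=1$. The main subtlety, rather than a real obstacle, lies in this final case: the non-adjacency hypothesis is what guarantees that no label in the union changes under the contraction, while the degree hypothesis is what supplies the coprime subset needed to force the union gcd to be $1$.
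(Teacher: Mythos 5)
Your proposal is correct and follows essentially the same three-case decomposition as the paper's proof (vertex adjacent to $w$, vertex not adjacent to $w$, and $v=w$ itself), using the same key observations: $g(w)=1$ handles neighbors of $w$, unchanged neighborhoods handle the rest, and the subset relation $N_G(u_i)\subseteq N_{G'}(w)$ handles $w$. The added remarks on the bijectivity of $g$ and on why $\deg_{G'}(v)>1$ implies $\deg_G(v)>1$ in the second subcase are minor refinements the paper leaves implicit.
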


\begin{proof}
	Let $v\in V(G')$ with $\deg(v)>1$. We will consider three cases.
	
\noindent\textbf{Case 1:}  Assume $u_{1}$ or $u_2\in N_G(v)$.  Note that by our assumption of $u_1$ and $u_2$ not being adjacent in $G$, we have $v\neq w$.  Then $w\in N_{G'}(v)$, and since $g(w)=1$, $\gcd\{g(N_{G'}(v))\}=1$.
	
\medskip	
\noindent\textbf{Case 2:} Next assume $v\neq w$ and $u_1,u_2\notin N_G(v)$.  Then we have $N_{G'}(v)=N_G(v)$.  Thus, $\gcd\{g(N_{G'}(v))\}=\gcd\{f(N_G(v))\}=1$ since $f$ is a neighborhood-prime labeling of $G$.
	
\medskip
\noindent\textbf{Case 3:} For our last case, let $v=w$ and without loss of generality, assume $|N(u_1)|>1$.  Then $N_G(u_1)\subseteq N_{G'}(v)$. Since $\gcd\{f(N_{G}(u_1))\}=1$, we can conclude that $\gcd\{g(N_{G'}(v))\}=1$ based on the subset relationship between the neighborhoods. 
\medskip

These cases cover all possible vertices $v$; thus, the labeling $g$ is neighborhood-prime.

\end{proof}

The \textit{star (k,n)-gon} is a graph constructed by adjoining the endpoints of a path $P_{k-2}$ to the endpoints of each edge of a cycle $C_n$, as shown in Figure~\ref{starmngon} for the case of $k=5$ and and $n=4$.  This graph can also be viewed as a snake graph of length $n$ in which the head and tail of the snake are contracted.  It should be noted that Seoud and Youssef~\cite{S_Y} proved that the star $(k,n)$-gon is prime for all $k,n\geq 3$, which implies that all snakes are prime through the removal of the last length $k$ cycle.  We use the opposite approach through the vertex contraction perspective to prove star $(k,n)$-gons are neighborhood-prime for certain $k$ values based on our snake results.

\begin{corollary}
The star $(k,n)$-gon is neighborhood-prime for all $n\geq 3$ when $k=3$, $4$, or $5$.
\end{corollary}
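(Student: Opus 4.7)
The plan is to obtain the star $(k,n)$-gon as the contraction of the base-path endpoints of the snake graph $S_{k,n+1}$ for $k=3,4,5$, and then to invoke Theorem~\ref{contraction}. First I would observe that contracting $u_1$ and $u_{n+1}$ in $S_{k,n+1}$ merges the base path $u_1 u_2 \cdots u_{n+1}$ into a cycle $C_n$ while preserving each of the $n$ attached $k$-cycles: an old base edge becomes an edge of $C_n$ that is shared with exactly one $k$-cycle, and the remaining $k-2$ vertices of that $k$-cycle then form a length-$(k-2)$ path joining two consecutive vertices of $C_n$. This is precisely the construction of the star $(k,n)$-gon.

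Next I would verify that the labelings from Theorems~\ref{triangular_snake}, \ref{quad_snake}, and~\ref{pentagon_snake}, applied to $S_{k,n+1}$, satisfy the hypotheses of Theorem~\ref{contraction} with $u_1$ and $u_{n+1}$ as the contracted pair. In each case the defining formula gives $f(u_1)=1$, and $f(u_{n+1})$ equals the maximum label $|V(S_{k,n+1})| = (k-1)n+1$: explicitly, $2n+1$ for $k=3$, $3n+1$ for $k=4$, and $4n+1$ for $k=5$. In the pentagonal case one must additionally check that the swap of labels between $u_i$ and $v_i$ at indices that are multiples of three never affects $u_{n+1}$; this follows immediately from the restriction $i<n+1$ built into the reassignment rule. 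The endpoints $u_1$ and $u_{n+1}$ are non-adjacent for $n\geq 3$, and $u_1$ has neighbors $u_2$ and $v_1$, so $|N(u_1)|>1$.

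Theorem~\ref{contraction} then produces a neighborhood-prime labeling of the contracted graph, which by the first paragraph is the star $(k,n)$-gon, completing the proof. I expect no real obstacle beyond bookkeeping; the only mild subtlety is the check in the pentagonal case that the reassignment rule leaves the largest label at $u_{n+1}$ rather than moving it to a neighboring vertex.
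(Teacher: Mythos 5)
Your proposal is correct and follows the same route as the paper: contract $u_1$ and $u_{n+1}$ in $S_{k,n+1}$ and apply Theorem~\ref{contraction} to the labelings from Theorems~\ref{triangular_snake}, \ref{quad_snake}, and~\ref{pentagon_snake}. Your extra checks (that $f(u_{n+1})=(k-1)n+1$ is the maximum label and that the pentagonal reassignment never touches $u_{n+1}$) are just a more explicit version of the verification the paper states in one sentence.
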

\begin{proof}
This is a direct application of Theorem~\ref{contraction} by contracting the vertices $u_1$ and $u_{n+1}$ at the ends of the path of a snake graph $S_{k,n+1}$.  The required assumptions are met since the labelings in Theorems~\ref{triangular_snake}, \ref{quad_snake}, and \ref{pentagon_snake} each have $f(u_1)=1$ and $f(u_{n+1})=|V(G)|$ with these vertices not being adjacent and with each having degree 2.
\end{proof}

Our previous results on triangular, quadrilateral, and pentagonal snakes can be greatly expanded upon for more general cases of the type of polygon.  Unfortunately, Theorem~\ref{contraction} cannot be applied to the upcoming graphs, because the labeling on the head and tail of the snake differs from the previous snake graphs.  We consider the general $k$-\textit{polygonal snake}, denoted by $S_{k,n}$, to be a path on $n$ vertices with endpoints of paths $P_k$ being attached to each pair of adjacent vertices.  To find a neighborhood-prime labeling for more certain larger values of $k$, we change our perspective of the snake graph by considering the vertices $v_1,\ldots, v_{|S_{k,n}|}$ as belonging to a single path of length $m=(n-1)(k-1)+1$ with additional edges from $v_{i(k-1)+1}$ to $v_{(i+1)(k-1)+1}$ for $i=0,\ldots, n-2$ that form the base of the snake.  

Using this perspective, we can label the $m$ vertices, for certain values of $k$ and $n$, using Patel and Shrimali's function for a neighborhood-prime labeling of a path, given in~\cite{Patel_Shrimali1}.  We have rewritten its formula slightly as the following for a path $P_n$ with vertices $\{v_1,v_2,\ldots, v_n\}$, where $f: V(P_n)\rightarrow \{1,2,\ldots, n\}$
\begin{equation}\label{path}
f(v_{i})=\begin{cases}
\text{  }\lfloor\frac{n}{2}\rfloor+\frac{i+1}{2} \hspace{.6cm}\text{if $i$ is odd}\\
\text{  }\frac{i}{2} \hspace{2cm}\text{if $i$ is even.}\\
\end{cases}
\end{equation}
This function is used to create a neighborhood-prime labeling for the snake $S_{9,3}$ in Figure~\ref{kgonsnake}.  To demonstrate that our labeling for snake graphs is neighborhood-prime, we will use the following result involving greatest common divisors.

\begin{remark}
For all integers $a$, $b$, $c$, and $d$, the following are true.
\begin{align}
\gcd\{a,b\}&=\gcd\{c\cdot a+d\cdot b,b\}  \label{gcd1} \\
\gcd\{a,b\}&=\gcd\{a,c\cdot a+d\cdot b\} \label{gcd2}.
\end{align}
\end{remark}

Note that the inequalities in the next theorem for the values of $k$ and $n$ are set to not repeat previous results and to avoid $n=2$, which is the case of a single cycle.

\begin{theorem}\label{kpolygonalsnake}
The $k$-polygonal snake $S_{k,n}$ has a neighborhood-prime labeling for the following cases where $k\geq 6$ and $n\geq 3$:
\begin{itemize}
\item $k\equiv 1\pmod{4}$ {\rm and }$n=2^{\ell}+1$ {\rm for }$\ell\geq 1$

\item $k\equiv 0\pmod{4}$ {\rm and }$n=2^{\ell}$ {\rm for }$\ell\geq 2$

\item $k\equiv 0\pmod{4}$ {\rm and }$n=2^{\ell}+1$ {\rm for} $\ell\geq 1$

\item $k=2^{\ell}+2$  {\rm for }$\ell\geq 2$ {\rm and }$n\equiv 3\pmod{4}$

\item $k$ {\rm even and }$n=3$

\item $k=2^{\ell}+3$  {\rm for }$\ell\geq 2$ {\rm and }$n$ {\rm even.}

\end{itemize}
\end{theorem}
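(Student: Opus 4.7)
The plan is to apply the Patel--Shrimali path labeling from (1) to the Hamiltonian path $v_1, v_2, \ldots, v_m$ with $m = (n-1)(k-1)+1$ underlying the snake $S_{k,n}$, where the additional ``base'' edges connect $v_{a_i}$ to $v_{a_{i+1}}$ with $a_i = i(k-1)+1$ for $i = 0, \ldots, n-2$. The key observation, which I would establish first, is that by construction of (1) the two path-neighbors $v_{j-1}$ and $v_{j+1}$ of any $v_j$ with $2 \le j \le m-1$ have consecutive integer labels: since $j-1$ and $j+1$ share the same parity, opposite to that of $j$, they both fall on the same ``half'' of the labeling and are assigned values that differ by exactly $1$. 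This alone forces $\gcd = 1$ on any neighborhood that contains both of them.

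From that observation the entire verification collapses to a single vertex. Every non-base vertex has exactly its two path-neighbors in the snake, so is fine. Every interior base vertex $v_{a_i}$ with $1 \le i \le n-2$ still contains both $v_{a_i - 1}$ and $v_{a_i + 1}$ in its $4$-element neighborhood (these are distinct from the base-neighbors because $k \ge 6$ makes the $a_i$'s well separated), so is also fine regardless of what the two base-neighbors contribute. The left endpoint $v_{a_0} = v_1$ has neighborhood $\{v_2, v_{a_1}\}$, and $f(v_2) = 1$, so its neighborhood $\gcd = 1$. The only vertex requiring genuine case analysis is the right endpoint $v_{a_{n-1}} = v_m$, whose neighborhood in the snake is precisely $\{v_{m-1}, v_{a_{n-2}}\}$ with $a_{n-2} = m - k + 1$.

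The heart of the proof is therefore to verify $\gcd(f(v_{m-1}), f(v_{m-k+1})) = 1$ under each of the six listed regimes on $(k,n)$. For each regime I would determine the parities of $m-1$ and $m-k+1$ from the hypothesis (in order to select the correct branch of (1)), obtain explicit closed forms for both labels in terms of $k$ and the parameter $\ell$, and then apply identities (2) and (3) from the preceding Remark to reduce the GCD to a simpler expression. The point of selecting $n \in \{2^\ell, 2^\ell+1\}$, or $k-2 = 2^\ell$, or $k-3 = 2^\ell$, is that the reduction typically pits a power of two against an odd quantity, which forces the GCD to equal $1$.

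The main obstacle is not conceptual but clerical: although the underlying argument is structurally clean, six disjoint case-analyses with shifting parities are required. I would organize these in a small table recording, for each regime, the parities of $m-1$ and $m-k+1$, the simplified closed forms of the two labels, and the reduced GCD to be shown trivial. The last regime, $k = 2^\ell + 3$ with $n$ even, looks the most delicate because neither parameter is itself a pure power of two; there the coprimality will likely depend on combining how powers of two enter through both $n$ and $k-3$ before invoking (2) and (3).
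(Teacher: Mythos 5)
Your proposal follows essentially the same route as the paper: label the Hamiltonian path $v_1,\ldots,v_m$ with the Patel--Shrimali function, dispose of $v_1$ via $f(v_2)=1$ and of all interior vertices via the consecutive path-neighbor labels, and reduce everything to checking $\gcd\{f(v_{m-1}),f(v_{m-k+1})\}=1$ by parity cases and the two gcd identities, which in each regime pits a power of two against an odd number. The only quibble is your remark about the final regime: for $k=2^{\ell}+3$ the quantity $\tfrac{k-3}{2}=2^{\ell-1}$ is itself a pure power of two and $n-1$ is odd, so that case is no more delicate than the others.
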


\begin{figure}
\begin{center}
\includegraphics[scale=1.1]{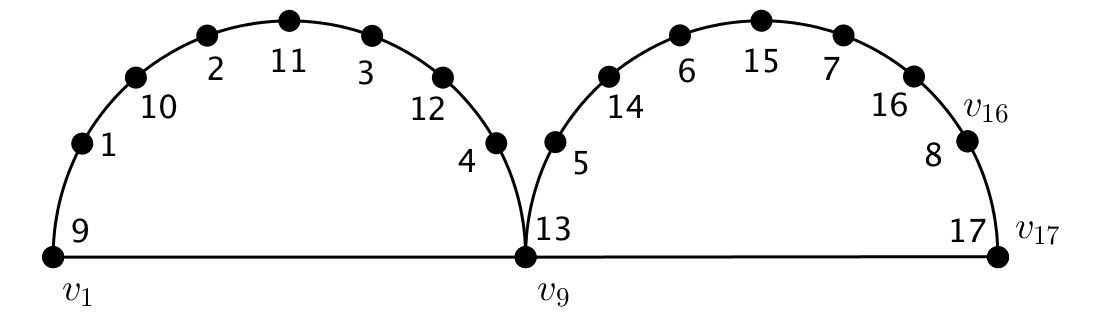}
\caption{Neighborhood-prime labeling of the snake $S_{9,3}$}\label{kgonsnake}
\end{center}
\end{figure}

\begin{proof}
We label the graph $S_{k,n}$ by assigning to the vertices $v_1,\ldots v_m$ with $m=(n-1)(k-1)+1$ the values from the function described in Equation~\eqref{path}, where we consider the path $P_m$ as tracing the edges of each cycle that are not along the base path as shown in Figure~\ref{kgonsnake}.  Note that the labeling function depends on the parity of $m$, which is determined by the cases of $k$ and $n$ being even or odd.

Let $v\in V(S_{k,n})$.  For $v=v_1$, since $v_2\in N(v_1)$ and $f(v_2)=1$ for any case of $k$ and $n$, we have $\gcd\{f(N(v_1))\}=1$.

When $v=v_i$ for $i=2,\ldots, m-1$, we have $\gcd\{f(N_{S_{k,n}}(v))\}=\gcd\{f(N_{P_m}(v))\}=1$ 
since the labeling on the subgraph $P_m$ is neighborhood-prime and $N_{P_m}(v)\subseteq N_{S_{k,n}}(v)$.

For the vertex $v=v_m$, we observe the neighborhood of this vertex to be $N(v_m)=\{v_{m-k-1},v_{m-1}\}$.  The labels for these vertices depend on the parity of $k$ and $n$.  
For the case of $k\equiv 1\pmod{4}$ and $n=2^{\ell}+1$ for $\ell\geq 1$, an example of which can be seen in Figure~\ref{kgonsnake}, $k$ and $n$ are both odd.  This implies $m$ is odd, $m-1$ is even, and $m-k+1$ is odd, where we recall $m=(n-1)(k-1)+1$.  Then Equation~\eqref{path} provides that $f(v_{m-k+1})=\frac{m-1}{2}+\frac{m-k+2}{2}$ and $f(v_{m-1})=\frac{m-1}{2}$.  Therefore, we have
\begin{align*}
\gcd\{f(N(v_m))\}&=\gcd\left\{\frac{m-1}{2}+\frac{m-k+2}{2},\frac{m-1}{2}\right\}  \\
&=\gcd\left\{\frac{m-k+2}{2},\frac{m-1}{2}\right\}\text{(using Eq.~\eqref{gcd1} with $c=1$ and $d=-1$)} \\
&=\gcd\left\{\frac{k-3}{2},\frac{m-1}{2}\right\}  \text{(using Eq.~\eqref{gcd1} with $c=-1$ and $d=1$)} \\ 
&=\gcd\left\{\frac{k-3}{2},\frac{(n-1)(k-1)}{2}\right\}\\
&=\gcd\left\{\frac{k-3}{2},\frac{(n-1)k-(n-1)}{2}\right\}\\
&=\gcd\left\{\frac{k-3}{2},\frac{2n-2}{2}\right\} \text{(using Eq.~\eqref{gcd2} with $c=-(n-1)$ and $d=1$)} \\ 
&=\gcd\left\{\frac{k-3}{2},n-1\right\}.
\end{align*}

When $k\equiv 1\pmod{4}$ or $k=4a+1$ for some integer $a$, $\frac{k-3}{2}=2a-1$ is odd.  For $n=2^{\ell}+1$, the value $n-1$ is a power of $2$.  Thus, we have that $\gcd\left\{\frac{k-3}{2},n-1\right\}=1$, making the labeling neighborhood-prime in this case.

In the case of $k\equiv 0\pmod{4}$ {\rm and }$n=2^{\ell}$ {\rm for }$\ell\geq 2$, the neighborhood of $v_m$ remains the same.  However, with $k$, $n$, and $m$ being even, Equation~\eqref{path} results in $f(v_{m-k+1})=\frac{m}{2}+\frac{m-k+2}{2}=m-\frac{k-2}{2}$ and $f(v_{m-1})=\frac{m}{2}+\frac{m}{2}=m$ since both $m-k+1$ and $m-1$ are odd.  Thus, we have
\begin{align*}
\gcd\{f(N(v_m))\}&=\gcd\left\{m-\frac{k-2}{2},m\right\}\\
&=\gcd\left\{\frac{k-2}{2},m\right\}  \text{(using Eq.~\eqref{gcd1} with $c=-1$ and $d=1$)} \\ 
&=\gcd\left\{\frac{k-2}{2},(n-1)(k-1)+1\right\} \\
&=\gcd\left\{\frac{k-2}{2},(n-1)k-n+2\right\} \\
&=\gcd\left\{\frac{k-2}{2},n\right\} \text{(using Eq.~\eqref{gcd2} with $c=-2(n-1)$ and $d=1$)}
\end{align*}

When $k\equiv 0\pmod{4}$, the integer $\frac{k-2}{2}$ is odd, which is relatively prime with $n$ since it is assumed to be a power of $2$, making this $\gcd$ equal to $1$.

For each of the three cases of $k\equiv 0\pmod{4}$ and $n=2^{\ell}+1$ for $\ell\geq 1$, $k=2^{\ell}+2$ $\ell\geq 2$ and $n\equiv 3\pmod{4}$, and $k$ even and $n=3$, we have that $k$ is even while $n$ is odd.  Therefore, the indices $m-k+1$ and $m-1$ are both even and thus $f(v_{m-k+1})=\frac{m-k+1}{2}$ and $f(v_{m-1})=\frac{m-1}{2}$.  Following similar reasoning as in the previous cases, we can compute the following
$$\gcd\{f(N(v_m))\}=\gcd\left\{\frac{m-k+1}{2},\frac{m-1}{2}\right\}=\gcd\left\{\frac{k-2}{2},\frac{n-1}{2}\right\}.$$
When $k\equiv 0\pmod{4}$ and $n=2^{\ell}+1$, $\frac{k-2}{2}$ is odd, and $\frac{n-1}{2}$ is a power of $2$, making their $\gcd$ be $1$.  The case of $k=2^{\ell}+2$ and $n\equiv 3\pmod{4}$ follows similarly with $\frac{k-2}{2}$ being a power of $2$ and $\frac{n-1}{2}$ being odd.  If $n=3$, then the second value in the pair is $1$, so any $k$ that is even results in $\gcd\{f(N(v_m))\}=1$.

The final case of $k=2^{\ell}+3$ for $\ell\geq 2$ and $n$ even results in $m-1$ being even and $m-k+1$ being odd.  Therefore, the labels on the vertices $v_{m-k+1}$ and $v_{m-1}$ are the same as in the first case, which implies

$$\gcd\{f(N(v_m))\}=\gcd\left\{\frac{m-1}{2}+\frac{m-k+2}{2},\frac{m-1}{2}\right\}=\gcd\left\{\frac{k-3}{2},n-1\right\}.$$
With $k=2^{\ell}+3$, $\frac{k-3}{2}$ is a power of $2$, and $n-1$ is odd in this case.  Hence, $\gcd\left\{\frac{k-3}{2},n-1\right\}=1$, which concludes our last case for showing $\gcd\{f(N(v_m))\}=1$, proving that this labeling is neighborhood-prime.

\end{proof}

One interesting outcome from Theorem~\ref{kpolygonalsnake} regards the hexagonal snake, or when $k=6$.  The fourth case included in the theorem provides a neighborhood-prime labeling if $n\equiv 3\pmod{4}$.  This is of note since the hexagonal cycle is not neighborhood-prime, as shown by~\cite{Patel_Shrimali1}, but forming a snake graph with hexagons is neighborhood-prime for these lengths.

In addition to connecting polygons along a path to form a snake, another class of graph involving attaching polygons is the book graph.  The $k$-\textit{polygonal book}, denoted $B_{k,n}$, is formed by $n$ copies of a $k$-polygon sharing a single edge.  Each $k$-polygon is referred to as a page of the book graph.  A three page pentagonal book and its neighborhood-prime labeling can be seen in Figure~\ref{pentbook}.  The cases of $k=3$ and $k=4$, or the triangular and rectangular books, were shown to be neighborhood-prime by Ananthavalli and Nagarajan~\cite{A_N}.  We now extend their results to the pentagonal book.

\begin{figure}
\begin{center}
\includegraphics[scale=.2]{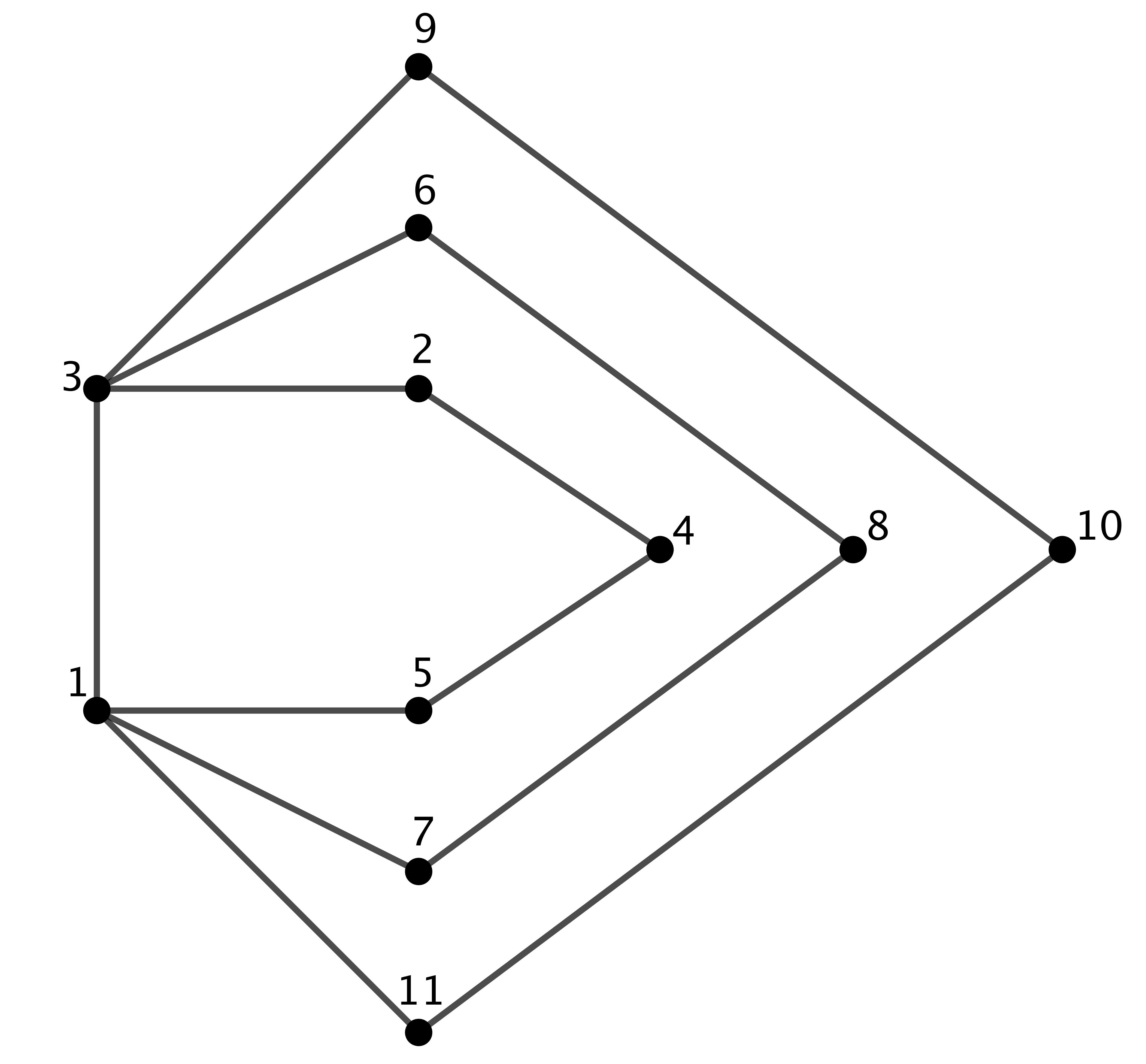}
\caption{Neighborhood-prime labeling of the pentagonal book $B_{5,3}$}\label{pentbook}
\end{center}
\end{figure}

\begin{theorem}
	The pentagonal book $B_{5,n}$ is neighborhood-prime for all $n$.
\end{theorem}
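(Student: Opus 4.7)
The plan is to extend the $f(u)=1$ idea used in earlier results: placing $1$ on one spine vertex automatically satisfies every neighborhood condition that contains it. Name the spine vertices $u,v$ and label the three other vertices of the $i$-th page so that the pentagon reads $u,a_i,b_i,c_i,v$. I would set $f(u)=1$ and $f(v)=3$, which immediately resolves the gcd condition at $v$ and at every $a_i$. What then remains are three families of conditions: at $u$, $\gcd\{3,f(a_1),\ldots,f(a_n)\}=1$; at each $c_i$, $\gcd\{3,f(b_i)\}=1$; and at each $b_i$, $\gcd\{f(a_i),f(c_i)\}=1$.

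To handle the $c_i$ conditions uniformly, I would park the $b_i$'s on integers coprime to $3$. The natural choice is the residues $2$ and $4$ modulo $6$, listed in order as $2,4,8,10,14,16,\ldots$; explicitly,
\[
f(b_i)=\begin{cases}3i-1 & \text{if } i \text{ is odd,}\\ 3i-2 & \text{if } i \text{ is even.}\end{cases}
\]
For the pairs $(a_i,c_i)$, I would keep them as close to consecutive integers as possible. For $i=1,\ldots,n-1$ set $f(c_i)=3i+3$ and let $f(a_i)$ be $3i+2$ if $i$ is odd or $3i+1$ if $i$ is even; on the last page set $f(a_n)=3n+1$ and $f(c_n)=3n+2$.

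Verification is then a short case check. Each $f(b_i)$ is $\equiv \pm 1 \pmod{3}$, giving the $c_i$ condition. The $b_i$ condition splits into three pair-types: $(3i+2,3i+3)$ for odd $i<n$ and $(3n+1,3n+2)$ for the final page are consecutive integers, while $(3i+1,3i+3)$ for even $i<n$ has difference $2$ with $3i+1$ odd, so all three are coprime. Finally, $f(a_1)=5$ is coprime to $3$, handling the condition at $u$.

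The main obstacle is the bookkeeping: one must verify that these assignments together with $f(u)=1$ and $f(v)=3$ partition $\{1,2,\ldots,3n+2\}$. The cleanest route is to sort the labels by residue modulo $6$ and treat the cases $n$ even and $n$ odd separately; within each residue class the count of labels lines up exactly with how many of $u,v,a_i,b_i,c_i$ land there, confirming the bijection.
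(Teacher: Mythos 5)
Your labeling is correct --- the label set does partition $\{1,\ldots,3n+2\}$ (the multiples of $3$ go to $v$ and $c_1,\ldots,c_{n-1}$; for $1\le i\le n-1$ the pair $\{3i+1,3i+2\}$ is split exactly between $a_i$ and $b_{i+1}$; and $\{1,2\}$ goes to $u,b_1$ while $\{3n+1,3n+2\}$ goes to $a_n,c_n$) --- and the gcd conditions all hold as you argue. This is essentially the paper's approach: it likewise places $1$ and $3$ on the two spine vertices, puts the multiples of $3$ on the page vertices adjacent to the spine vertex labeled $3$, and alternates $3i+1$ versus $3i+2$ by the parity of $i$ so that each middle vertex sees either consecutive integers or two odd labels differing by $2$; the only cosmetic difference is that the paper special-cases the first page where you special-case the last.
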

\begin{proof}
	Consider the first page to consist of the vertices $u_1,v_1,w_1,x_1,u_2$ with each subsequent page being a path $u_1,v_i,w_i,x_i,u_2$ for $i=2,\ldots, n$.  The vertices of the pentagonal book are labeled as follows.  The first page is labeled as $f(u_1)=3$, $f(v_1)=2$, $f(w_1)=4$, $f(x_1)=5$, and $f(u_2)=1$.  For the remaining pages in which $i=2,\ldots,n$, we assign
	\begin{align*}
	f(v_i)&=3i,\\
	f(w_i)&=\begin{cases}
		\text{  }3i+1 \text{ when }i \text{ is odd}\\
		\text{  }3i+2 \text{ when }i \text{ is even,} 
		\end{cases}\\		
	f(x_i)&=\begin{cases}
		\text{  }3i+2 \text{ when }i \text{ is odd}\\
		\text{  }3i+1 \text{ when }i \text{ is even.} 
		\end{cases}
		\end{align*}
Let $v\in V(B_{5,n})$.  For $v=v_1, w_1$, or $x_1$, one can see by inspection that its neighborhood has relatively prime labels.  When $v=u_1$ or $x_i$ for $i=2,\ldots, n$, we see that $u_2\in N(v)$.  Since $f(u_2)=1$, then $\gcd\{f(N(v))\}=1$.  If $v=u_2$, then $\{u_1,x_1\}\subseteq N(u_2)$.  Since the labels of these vertices are $3$ and $5$, $\gcd\{f(N(u_2))\}=1$.

The cases that remain are $v=v_i$ and $v=w_i$ for $i=2,\ldots, n$.  We have $N(v_i)=\{u_1,w_i\}$ where $f(u_1)=3$.  Since $f(w_i)=3i+1$ or $3i+2$, neither of which are multiples of $3$, we have $\gcd\{f(u_1),f(w_i)\}=1$.  Finally, we consider $v=w_i$, which has $\{v_i,x_i\}$ as its neighborhood.  For $i$ that are even, these two vertices have consecutive labels of $3i$ and $3i+1$.  If $i$ is odd, the labels $3i$ and $3i+2$ are consecutive odd integers.  In either case of the parity of $i$, $\gcd\{f(N(w_i))\}=1$.  This concludes the final case and proves the labeling is in fact neighborhood-prime.
\end{proof}

Many graphs, such as the complete graph, have been shown to not be prime, but they are in fact neighborhood-prime, or in the case of the wheel graph, it is not prime for certain cycle lengths, but neighborhood-prime for all lengths.  We conclude this section by considering the M{\"o}bius ladder and show that it falls into this latter category.  The \textit{M{\"o}bius ladder} $M_{2n}$ is formed by first considering the ladder graph $L_n=P_n\times P_2$, which is the Cartesian product on these two paths.  If we call the vertices along the two length $n$ paths $v_1,\ldots, v_n$ and $u_1,\ldots, u_n$, then the M{\"o}bius ladder results from the inclusion of edges $v_1u_n$ and $u_1v_n$, as shown in Figure~\ref{MobiusLadderGraph} for $n=6$.  Though it appears to still be an open problem to prove the M{\"o}bius ladder is prime when $n$ is odd, it has been shown to be not prime when $n$ is even based on a result by Fu and Huang~\cite{F_H} on the independence number of a prime graph.  We show that this graph is neighborhood-prime without regard to the parity of its cycle length.

\begin{theorem}
M{\"o}bius ladders $M_{2n}$ have a neighborhood-prime labeling for all $n$.
\end{theorem}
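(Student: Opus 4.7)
The plan is to use the natural alternating labeling that puts all odd labels on one rail of the ladder and all even labels on the other. Specifically, I would set $f(v_i) = 2i-1$ and $f(u_i) = 2i$ for $i = 1, \ldots, n$, which is clearly a bijection onto $\{1, 2, \ldots, 2n\}$. Since every vertex of $M_{2n}$ has degree $3$, I would verify $\gcd\{f(N(v))\} = 1$ for every $v$ by splitting into two groups: the interior vertices $v_i, u_i$ with $2 \leq i \leq n-1$, and the four endpoints $v_1, v_n, u_1, u_n$ which carry the twist edges $v_1u_n$ and $u_1v_n$.

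For an interior $v_i$, the neighborhood $\{v_{i-1}, v_{i+1}, u_i\}$ receives labels $\{2i-3,\, 2i+1,\, 2i\}$, and the consecutive pair $2i,\, 2i+1$ immediately forces the gcd to $1$. For an interior $u_i$, the neighborhood receives labels $\{2i-2,\, 2i+2,\, 2i-1\}$, and now the consecutive pair $2i-2,\, 2i-1$ does the same job. No case analysis on parities of $i$ is needed; the alternating layout of the rails is doing the work.

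The four endpoints I would handle individually. The label sets are $f(N(v_1)) = \{3,\, 2,\, 2n\}$, whose gcd is $1$ since $\gcd(2,3)=1$; $f(N(v_n)) = \{2n-3,\, 2n,\, 2\}$, whose gcd is $1$ because $2n-3$ is odd; and both $f(N(u_1))$ and $f(N(u_n))$ contain $f(v_1)=1$ thanks to the twist edges $u_1v_n$ and $v_1u_n$, so those gcd's are trivially $1$.

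I do not anticipate a serious obstacle here: the parity split across the rails handles the interior automatically, and the Möbius twist is actually convenient because it drops the label $1$ into the neighborhoods of both $u_1$ and $u_n$. The only item worth mentioning is the small case $n = 2$, where $M_4 \cong K_4$ and the ``interior'' range is empty, but the endpoint verification above still applies verbatim, and one can even read off $\gcd\{f(N(v))\} = 1$ for each vertex directly.
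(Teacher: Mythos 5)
Your proof is correct and is essentially the paper's own argument: the identical alternating odd/even labeling across the two rails (with the roles of $u_i$ and $v_i$ swapped relative to the paper's choice), interior vertices handled by the consecutive pair of labels in each neighborhood, and the four end vertices checked directly. The only nitpick is that $f(v_1)=1$ lands in $N(u_1)$ via the rung edge $u_1v_1$ rather than a twist edge, but the conclusion there is unaffected.
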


\begin{figure}
\begin{center}
\includegraphics[scale=.75]{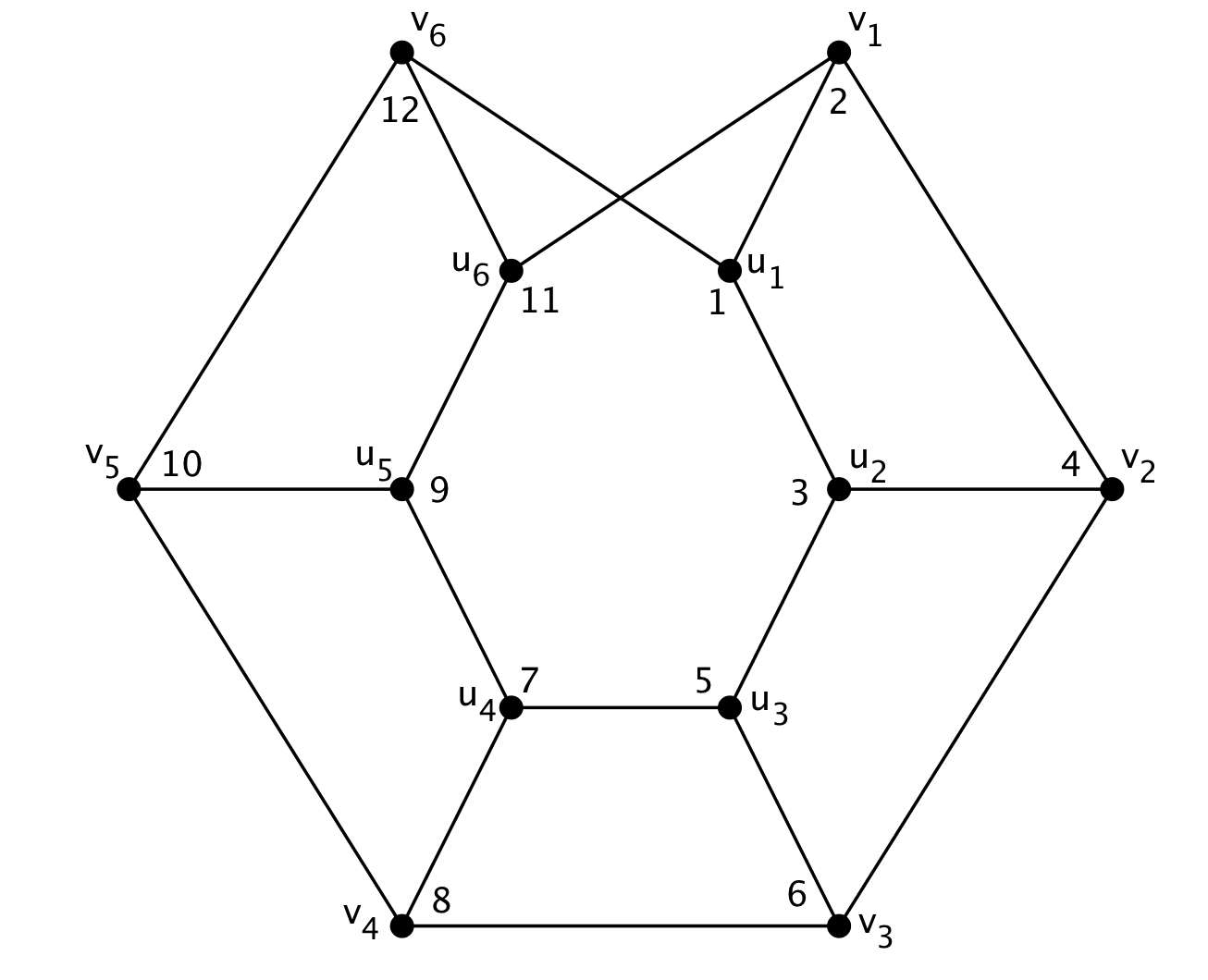}
\caption{Neighborhood-prime labeling of the M{\"o}bius ladder $M_{12}$}\label{MobiusLadderGraph}
\end{center}
\end{figure}

\begin{proof}
We label the vertices of $M_{2n}$ as follows: $f(u_i)=2i-1$ and $f(v_i)=2i$ for $i=1,\ldots, n$.  For $v\in V(M_{2n})$, the neighborhood of $v$ contains consecutive integer labels if $v=u_i$ for $i=1,\ldots, n-1$ or if $v=v_i$ for $i=2,\ldots, n$.  In the case of $v=v_1$, we have $u_1\in N(v_1)$, hence $\gcd\{f(N(v_1))\}=1$ because $f(u_1)=1$.  For the final case of $v=u_n$, $N(u_n)=\{u_{n-1}, v_n, v_1\}$.  Since $f(v_1)=2$ and $f(u_{n-1})$ is odd, the $\gcd$ of this set of labels is $1$, proving the labeling is neighborhood-prime.
\end{proof}

\section{Neighborhood-Prime Labelings of Trees}

Most neighborhood-prime labelings that have been found by previous authors have focused on graphs that contain cycles.  Some trees, however, have also been shown to be neighborhood-prime, such as paths, fans, and the sum $P_m+\overline{K_n}$ in~\cite{Patel_Shrimali1} and the bistar in~\cite{A_M}.

The labeling of a path is of particular interest, as paths are often building blocks for more complicated trees.  Patel and Shrimali provided a neighborhood-prime labeling of a path on $n$ vertices, as described in Equation~\eqref{path}.  This labeling will be used in several of our upcoming results on trees.  First, we demonstrate a way to add a pendant edge, or an edge connecting to a leaf, to a graph and maintain a neighborhood-prime labeling.

\begin{theorem}\label{pendent_vertex}
Assume $G$ is a neighborhood-prime graph with $|V(G)|=n$.  Let $v$ be a vertex in $G$ with $\deg(v)>1$.  The graph $G'$ obtained by attaching a pendent edge to the vertex $v$ is neighborhood-prime.
\end{theorem}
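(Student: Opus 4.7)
The plan is to exhibit an explicit extension of the given labeling. Let $f$ be the neighborhood-prime labeling of $G$ and let $u$ denote the new pendant vertex adjacent to $v$ in $G'$. I would define $g:V(G')\to\{1,2,\ldots,n+1\}$ by $g(w)=f(w)$ for every $w\in V(G)$ and $g(u)=n+1$. This is immediately a bijection since $f$ was, and $n+1$ is a fresh label.

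To verify the neighborhood-prime condition on $G'$, I would split into three cases according to which vertex is being examined. If $w=u$, then $\deg_{G'}(u)=1$ and no condition is required. If $w\neq v,u$, then $N_{G'}(w)=N_G(w)$ and $\deg_{G'}(w)=\deg_G(w)$, so any required gcd statement reduces to the one already guaranteed by $f$. The only new condition to check is at $v$ itself, where $N_{G'}(v)=N_G(v)\cup\{u\}$.

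For that final case I would use the one-line observation that $\gcd(S\cup\{k\})$ always divides $\gcd(S)$, so if $\gcd(S)=1$ then $\gcd(S\cup\{k\})=1$ as well. Since $\deg_G(v)>1$ by hypothesis, $f$ being neighborhood-prime on $G$ forces $\gcd(f(N_G(v)))=1$, and appending the label $n+1$ cannot destroy this. The hypothesis $\deg(v)>1$ is used essentially here: without it, $v$ would have no pre-existing gcd guarantee, and the gcd of $\{f(\text{lone neighbor}),n+1\}$ need not be $1$.

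No real obstacle arises; the only thing to be careful about is justifying why no case besides $v$ requires any new verification, and why the hypothesis $\deg(v)>1$ cannot be dropped. Both points are short, so the whole proof will be just a paragraph.
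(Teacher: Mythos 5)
Your proposal is correct and matches the paper's proof essentially verbatim: the same extension $g(u)=n+1$, $g(w)=f(w)$, the same three-case check, and the same observation that enlarging a set whose labels have $\gcd$ equal to $1$ cannot increase the $\gcd$. No differences worth noting.
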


\begin{proof}
Let $f: V(G)\rightarrow \{1,2,\ldots,n\}$ be the neighborhood-prime labeling for $G$, and let $v'$ be the new vertex adjacent to $v$, resulting in $V(G')=V(G)\cup \{v'\}$.  We define a function $g: V(G')\rightarrow \{1,2,\ldots,n,n+1\}$ by assigning $g(v')=n+1$ and $g(u)=f(u)$ for all $u\in V(G)$.  

Let $u$ be a vertex in $G'$ with $\deg(u)>1$, which note is not $v'$ since it is a pendant vertex.
For the case of $u=v$, we have $N_{G'}(v)= N_{G}(v)\cup \{v'\}$.  Therefore, $\gcd\{g(N_{G'}(v))\}=\gcd\{f(N_G(v))\}=1$ since $N_G(v)\subset N_{G'}(v)$ and because $f$ is a neighborhood-prime labeling of $G$.

In the case of $u\neq v$, $N_{G'}(u)= N_{G}(u)$, so $\gcd\{g(N_{G'}(u))\}=1$.  Thus, we have that $g$ is a neighborhood-prime labeling of $G'$.
\end{proof}

The first class of tree that we consider is the \textit{caterpillar}, which is a tree with all vertices being within distance $1$ of a central path.  Caterpillars can be constructed from a path $P_n$ in which each of the $n-2$ interior vertices either remains as degree $2$ or is adjacent to at least one pendent vertex, as seen in Figure~\ref{caterpillar}.  This structure allows us to use Theorem~\ref{pendent_vertex} to find a neighborhood-prime labeling.  Note that our following result on labeling the caterpillar generalizes the bistar graph that was examined in~\cite{A_M}.

\begin{figure}
\begin{center}
\includegraphics[scale=.95]{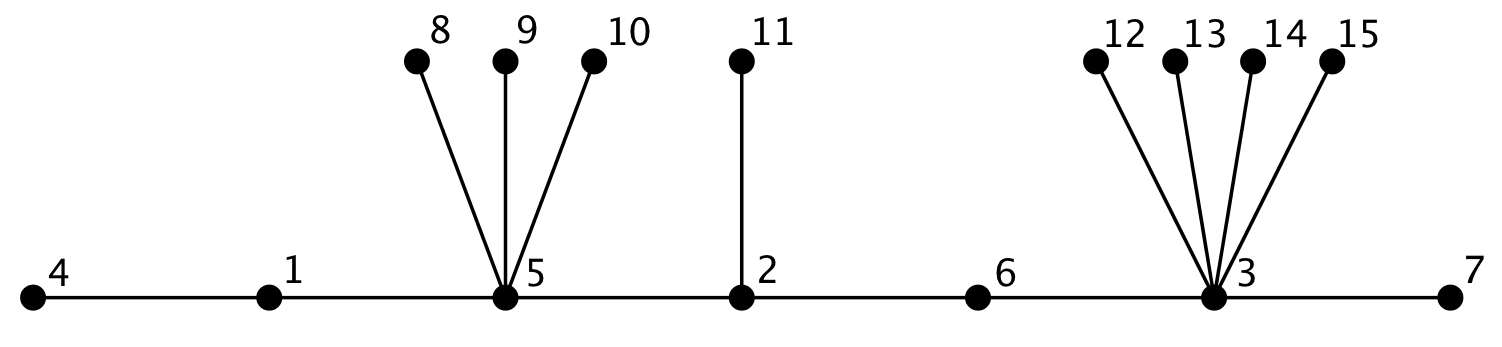}
\caption{Neighborhood-prime labeling of a caterpillar}\label{caterpillar}
\end{center}
\end{figure}

\begin{corollary}\label{caterpillars}
All caterpillars have neighborhood-prime labelings.
\end{corollary}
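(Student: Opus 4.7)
The plan is to build the caterpillar incrementally from its central spine and invoke Theorem~\ref{pendent_vertex} once for every pendant. Let $C$ be a caterpillar with central path $P_n = v_1 v_2 \cdots v_n$. The trivial cases $n \leq 2$ are immediate: such a caterpillar has at most two vertices, none of degree greater than $1$, so the neighborhood-prime condition is vacuous and any bijective labeling suffices. Assume henceforth that $n \geq 3$, so there is at least one interior spine vertex to which pendants might be attached.

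First I would label the spine $P_n$ using Patel and Shrimali's formula~\eqref{path}, which yields a neighborhood-prime labeling of the path. Then, enumerating the pendant vertices of $C$ (those not on the spine) in any order, I would attach them one at a time to their corresponding interior spine vertices, producing a sequence of intermediate graphs $G_0 = P_n,\ G_1,\ \ldots,\ G_s = C$. At each stage, the spine vertex $v_i$ receiving a new pendant is an interior vertex of $P_n$, so $\deg_{G_0}(v_i) = 2$; since pendant additions can only increase vertex degrees, we have $\deg_{G_{j-1}}(v_i) \geq 2 > 1$ at every step. Hence Theorem~\ref{pendent_vertex} applies at each stage and lifts the neighborhood-prime labeling of $G_{j-1}$ to one of $G_j$. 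After $s$ iterations we obtain a neighborhood-prime labeling of $C$.

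There is essentially no obstacle here beyond verifying the hypothesis of Theorem~\ref{pendent_vertex} at each step, which is immediate from the monotonicity of degree under pendant addition. The result is therefore a direct corollary of Theorem~\ref{pendent_vertex} combined with Patel and Shrimali's neighborhood-prime labeling of the path, justifying why the statement was phrased as a corollary rather than a theorem.
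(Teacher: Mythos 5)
Your proposal is correct and follows essentially the same route as the paper: label the spine with the Patel--Shrimali path labeling from Equation~\eqref{path}, then attach the pendants one at a time via Theorem~\ref{pendent_vertex}, using the fact that each receiving spine vertex is interior and hence already has degree at least $2$. The extra care you take with the trivial cases $n \leq 2$ and with the monotonicity of degrees under pendant addition is a minor elaboration of the argument the paper gives.
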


\begin{proof}
Consider a caterpillar with a central path $P_n$.  First label the path using Patel and Shrimali's path labeling given in Equation~\eqref{path}.  Then since the interior path vertices were initially degree $2$, Theorem~\ref{pendent_vertex} provides a way to assign labels to each pendent vertex while maintaining the neighborhood-prime labeling condition at each step.
\end{proof}


	

	

A \textit{spider} is another class of trees that can be shown to neighborhood-prime. It is defined as a tree with only one vertex of degree $3$ or more, but a spider can also be viewed as a collection of paths $P_{n_1}$, \ldots, $P_{n_k}$ with one end of each path adjoined to a central vertex.  Figure~\ref{spider} shows a spider with a neighborhood-prime labeling.  Spiders have also been proven to have prime labelings, as seen in~\cite{Lee} by Lee, Wui, and Yeh.  

\begin{theorem}
All spiders are neighborhood-prime.
\end{theorem}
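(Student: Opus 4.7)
The plan is to construct a neighborhood-prime labeling explicitly by applying the Patel--Shrimali path labeling~\eqref{path} to a linear ordering $w_1,\ldots,w_N$ of the vertices of the spider, where $N=1+\sum_{i=1}^k n_i$. The case $k\le 2$ reduces to a path, for which~\eqref{path} applies directly; so assume $k\ge 3$, with branches $P_{n_1},\ldots,P_{n_k}$, center $c$, the vertex of $P_{n_i}$ adjacent to $c$ denoted $c_i$, and the leaf of $P_{n_i}$ denoted $\ell_i$.

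The primary ordering I plan to use sets $w_1=c_1$, $w_2=c$, $w_3=c_2$, then continues with all of branch~$2$ (from $c_2'$ to $\ell_2$), then branches $3,4,\ldots,k$ in turn, and finally the remaining vertices $c_1',\ldots,\ell_1$ of branch~$1$. Applying~\eqref{path} to this ordering gives $f(c)=1$, which makes the neighborhood condition at every $c_i$ automatic. At the same time $f(c_1)$ and $f(c_2)$ become the consecutive ``large'' PS values $\lfloor N/2\rfloor+1$ and $\lfloor N/2\rfloor+2$, so $\gcd\{f(c_1),f(c_2)\}=1$, which in turn forces $\gcd\{f(N(c))\}=1$. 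For every interior vertex in a branch whose two spider neighbors occupy consecutive ordering positions, \eqref{path} delivers two labels that differ by $1$ inside one of the two arithmetic progressions that the formula produces, so their gcd is automatically $1$.

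The main obstacle is the neighborhood condition at $c_1'$ when $n_1\ge 3$: the spider neighbor $c_1$ of $c_1'$ sits at ordering position~$1$, while $c_1''$ lands near the end of the ordering, so their PS labels are not automatically consecutive and may share a common factor. My plan is a case analysis. Whenever some branch has length at most~$2$, I would designate it to be branch~$1$, which removes the issue since then $c_1'$ is either absent or a leaf. When every branch has length at least~$3$, I would first exploit the identity $\gcd\{f(w_1),f(w_N)\}=1$ that always holds for~\eqref{path}; for $n_1\in\{3,4\}$ the vertex $c_1''$ lands at position $N$ or $N-1$, and this secures the required coprimality. For the residual subcase (all branches long and $n_1\ge 5$, with unfavorable parities of $N$) I would switch to the alternative ``branch~$1$ reversed first'' ordering $\ell_1,\ldots,c_1',c_1,c,c_2,\ldots,\ell_2,c_3,\ldots,\ell_k$, under which every interior condition inside branch~$1$ is handled directly by~\eqref{path}, and the center condition still holds because $c_1$ and $c_2$ now occupy the same-parity positions $n_1$ and $n_1+2$ and thus again receive consecutive labels. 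The most delicate step will be verifying that this family of orderings, possibly combined with a permutation of the order in which branches $3,\ldots,k$ are appended, jointly covers every spider while simultaneously securing $\gcd\{f(c),f(c_i')\}=1$ for each $i\ge 3$ whenever $f(c)\ne 1$.
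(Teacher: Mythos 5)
Your overall strategy---threading all the vertices of the spider into one linear order and applying the Patel--Shrimali formula~\eqref{path} to that order---is genuinely different from the paper's construction (the paper assigns $1$ to the center outright and labels each leg separately with a shifted version of~\eqref{path} that starts each leg at the vertex nearest the center, so that every interior leg vertex automatically sees consecutive labels and the only nontrivial check is at the center). But as written your argument has concrete gaps at exactly the point you flag as delicate. First, the claim that $n_1=4$ is ``secured'' is false: there $c_1''$ sits at position $N-1$, not $N$, and the identity $\gcd\{f(w_1),f(w_N)\}=1$ does not transfer. When $N\equiv 2\pmod 4$ one gets $f(w_1)=N/2+1$ even and $f(w_{N-1})=N$ even, so $\gcd\{f(c_1),f(c_1'')\}\ge 2$; e.g.\ branches of lengths $4,4,5$ give $N=14$, $f(c_1)=8$, $f(c_1'')=14$. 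So the ``residual subcase'' requiring the fallback ordering is larger than you state.

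Second, and more seriously, the fallback ordering $\ell_1,\ldots,c_1,c,c_2,\ldots$ destroys the one feature that made the rest of the primary argument work: the center no longer receives label $1$ (it does so only if $n_1=1$, which is excluded in this case). Consequently every vertex $c_j$ with $j\ge 3$ now needs $\gcd\{f(c),f(c_j')\}=1$ to be argued separately, and you explicitly leave this unverified while conceding it is the crux. Since the positions of the $c_j'$ depend on all the branch lengths, there is no evident reason a single permutation of branches $3,\ldots,k$ fixes all of these simultaneously, so this is a missing argument rather than a routine verification. The paper avoids the whole difficulty by not forcing the labels to come from one global application of~\eqref{path}: giving the center the label $1$ by fiat and restarting a shifted path labeling on each leg makes every condition except the center's trivial, and the center is then handled by arranging one of its neighbors to have label $2$ and another an odd label (reflecting the last leg if all leg lengths are even). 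If you want to complete your route, you would need either to prove the coprimality conditions at $c_1'$ and at the $c_j$ in the fallback case, or to restructure so that the center always lands at an even position of the global ordering.
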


\begin{proof}
We begin the labeling by assigning the label $1$ to the central vertex.  For each path, we apply a similar function to the path labeling from Equation~\eqref{path}.  We assign the smallest label at the first vertex in each path though instead of the second vertex, and we must shift the labels to begin each path with the smallest available label.  Consider the labeling function for a set of vertices $v_1,\ldots, v_m$ with $N\in \mathbb{N}$:
$$f_N(v_i)=\begin{cases}
\text{  }N+\frac{i+1}{2} \hspace{1.8cm}\text{if $i$ is odd}\\
\text{  }N+\lfloor\frac{m}{2}\rfloor+\frac{i}{2} \hspace{1cm}\text{if $i$ is even.}\\
\end{cases}$$
For each path $P_{n_j}$, if we consider the vertices as $v_1,\ldots, v_{n_j}$ with $v_1$ being adjacent to the center and $v_{n_j}$ as the leaf, we assign labels using $f_N(v_i)$ in which $m=n_j$ and $N=1+n_1+\cdots +n_{j-1}$.  In the case of every length $n_j$ being even, we reflect the labeling of the last path by swapping the labels of $f_N(v_{n_j-i+1})$ and $f_N(v_{i})$ for each $i=1,\ldots, n_j$.  This reassignment can be seen in the labeling on Figure~\ref{spider} on the path $P_6$.

Consider a vertex $v$ with degree at least $2$.  If $v$ is on a path $P_{n_j}$ and is adjacent to the central vertex, then $f(N(v))$ contains the label $1$.  If $v$ is on a path, but not adjacent to the central vertex, its neighborhood is labeled by consecutive integers.  Finally, we consider when $v$ is the central vertex.  The label of the first vertex of $P_{n_1}$ is $2$, and if any of the lengths of the paths are odd, then the first vertex of the path immediately following the first odd $n_{i}$ will be labeled with an odd integer.  If none of the lengths are odd, then the reflection that is made on $P_{n_j}$ will result in the first vertex of that path having an odd label.  Since $\gcd\{2,\ell\}=1$ for any odd $\ell$, the neighborhood of the central vertex will have a relatively prime set of labels, proving the labeling is neighborhood-prime.
\end{proof}

\begin{figure}
\begin{center}
\includegraphics[scale=.75]{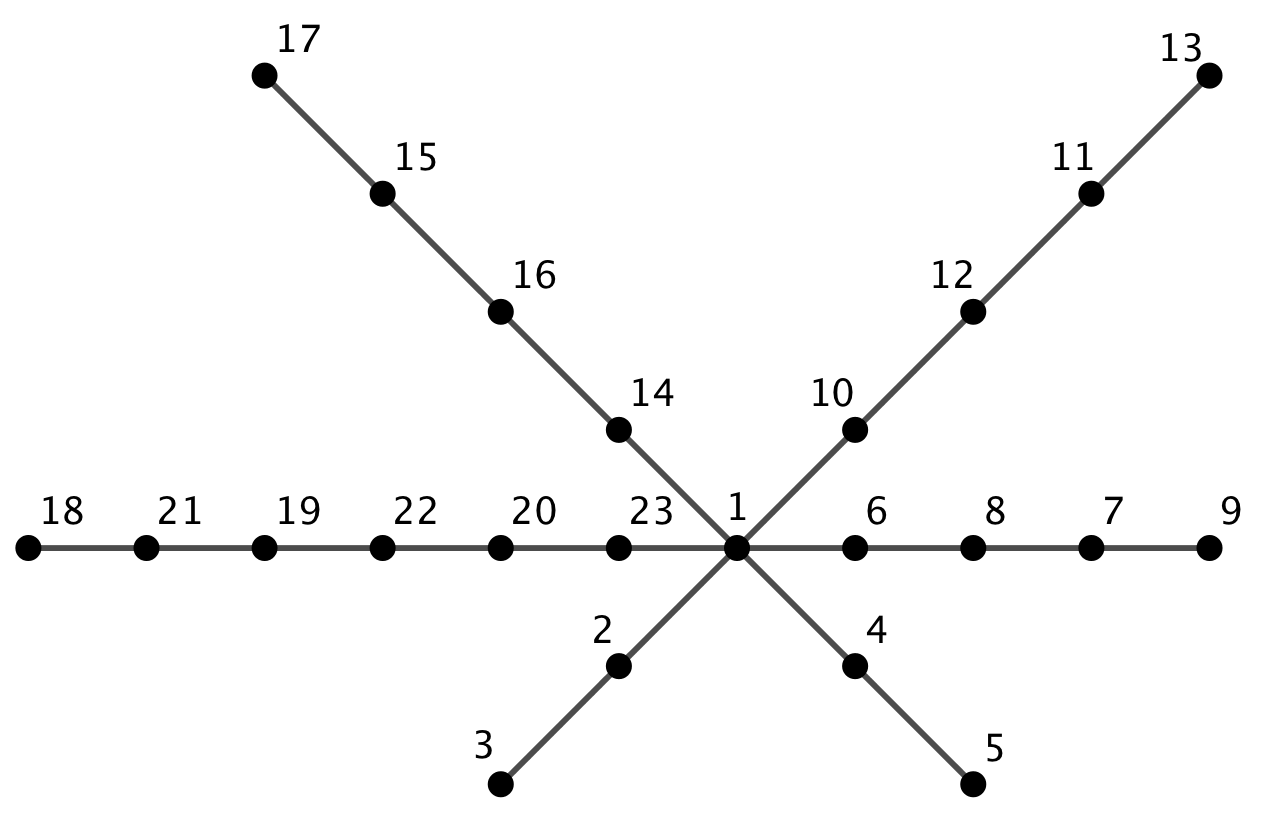}
\caption{Neighborhood-prime labeling a spider with paths $P_2$, $P_2$, $P_4$, $P_4$, $P_4$, and $P_6$}\label{spider}
\end{center}
\end{figure}

The next tree we will examine is the $(n,k)$-\textit{banana tree}.  This graph is obtained by attaching a leaf from $n$ copies of a $k$-star to a single root vertex.  It has $n k+1$ vertices, and an example of this type of tree with a neighborhood-prime labeling can be seen in Figure~\ref{banana}.  We refer to the root vertex as $v'$, the leaf of the $i$th star that is adjacent to the root as $u_i$, and the center of each star as $w_i$.  We specifically investigate banana trees for $n\geq 3$ and $k\geq 4$ to avoid repetition of graphs that are caterpillars or spiders.

\begin{theorem}
The $(n,k)$-banana tree admits a neighborhood-prime labeling for all $n\geq 3$ and $k\geq 4$.
\end{theorem}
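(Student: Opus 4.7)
My plan is to exploit two simple sufficient conditions for a neighborhood to have $\gcd$ equal to $1$: either the neighborhood contains a vertex labeled $1$, or it contains two consecutive integers. I would set $f(v') = 1$; since every $u_i$ has $N(u_i) = \{v', w_i\}$, this instantly discharges all $n$ of the $u_i$-conditions. Next, to handle the root's own condition $\gcd\{f(u_1), \ldots, f(u_n)\} = 1$, I would assign $f(u_i) = i + 1$ for $i = 1, \ldots, n$, so that $f(N(v')) = \{2, 3, \ldots, n+1\}$, whose $\gcd$ is $1$ because it contains the coprime pair $\{2, 3\}$ (this uses only $n \geq 2$).

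This leaves the labels $\{n+2, n+3, \ldots, nk+1\}$, a set of $n(k-1)$ consecutive integers, to distribute among the $n$ stars, each of which still needs a label for its center $w_i$ plus its $k - 2$ remaining leaves. I would chop this range into $n$ consecutive blocks $B_i = \{n + (i-1)(k-1) + 2, \ldots, n + i(k-1) + 1\}$, each of size $k - 1$, and assign $B_i$ to the $i$th star. Within $B_i$, I would give $f(w_i)$ the first entry of the block and distribute the remaining $k - 2$ consecutive labels arbitrarily to the $k - 2$ other leaves of the $i$th star. Then $f(N(w_i))$ consists of $f(u_i) = i + 1$ together with $k - 2$ consecutive integers, and because $k \geq 4$ gives $k - 2 \geq 2$, these leaf labels themselves contain a consecutive pair, which is coprime, so $\gcd\{f(N(w_i))\} = 1$.

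The verification then reduces to three types of vertex of degree greater than $1$: the root $v'$, each $u_i$, and each $w_i$, with every remaining vertex being a leaf and imposing no condition. I do not anticipate any real obstacle, since the construction is completely explicit and each case collapses to one of the two sufficient conditions above. The hypothesis $k \geq 4$ is genuinely needed at exactly one point — to guarantee that each $B_i$ contributes at least two non-$u_i$ leaves to $N(w_i)$, and hence a consecutive pair there — and for $k = 3$ the banana tree degenerates into a caterpillar already handled by Corollary~\ref{caterpillars}.
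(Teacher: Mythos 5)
Your proposal is correct and is essentially identical to the paper's own proof: the same labels $f(v')=1$, $f(u_i)=i+1$, $f(w_i)=(i-1)(k-1)+n+2$ with the remaining consecutive block going to the other leaves of the $i$th star, and the same three-case verification (label $1$ in $N(u_i)$, consecutive integers in $N(v')$, and a consecutive pair of leaf labels in $N(w_i)$ using $k\geq 4$). No differences worth noting.
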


\begin{proof}
We label an $(n,k)$-banana tree $G$ with the following function $f: V(G)\rightarrow \{1,\ldots, n k+1\}$.  We first assign $f(v')=1$, $f(u_i)=i+1$, and $f(w_i)=(i-1)(k-1)+n+2$.  The remaining leaves of the $i$th star are labeled with the integers in $\{(i-1)(k-1)+n+3,\ldots,i(k-1)+n+1\}$, as seen in Figure~\ref{banana} for the case of $n=3$ and $k=6$.

To show this labeling is neighborhood-prime, we consider a vertex $v\in V(G)$ in which $\deg(v)>1$.  The first case is $v=v'$, whose neighborhood is $\{u_1,\ldots, u_n\}$.  This neighborhood contains consecutive integers, and hence $\gcd\{f(N(v'))\}=1$.

The second case is $v=w_i$ for some $i=1,\ldots, n$.  Since $k\geq 4$, $N(w_i)$ contains two leaves labeled by consecutive integers.  Finally, if $v=u_i$, we have that $v'\in N(u_i)$.  Since $f(v')=1$, the $\gcd$ of the labels in its neighborhood is also $1$.  Thus, the labeling is neighborhood-prime.
\end{proof}

\begin{figure}
\begin{center}
\includegraphics[scale=.75]{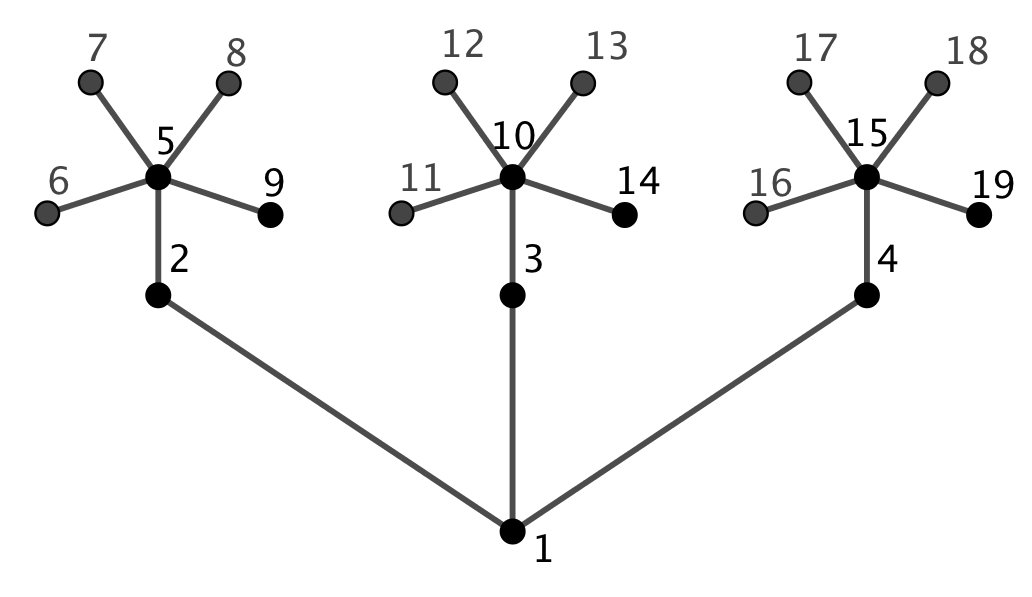}
\caption{Neighborhood-prime labeling of the banana tree $B_{3,6}$}\label{banana}
\end{center}
\end{figure}

An $(n,k)$-\textit{firecracker}, denoted $F_{n,k}$ and also referred to as a \textit{palm tree}, is a path $P_n$ in which each vertex is also a leaf on a $k$-star.  See Figure~\ref{firecracker} for examples of  $F_{6,3}$ and $F_{3,5}$ with neighborhood-prime labelings.  We will only consider $k\geq 3$ since $k=1$ is a path and $k=2$ is a caterpillar, both of which are already proven to be neighborhood-prime.  For the case of $k=3$, we will call the vertices along the path $u_1,\ldots, u_n$, the vertices adjacent the path $v_1,\ldots,v_n$, and the leaves $w_1,\ldots, w_n$ where $N(v_i)=\{u_i,w_i\}$.

Our labeling of firecrackers will rely on a coprime matching theorem by Pomerance and Selfridge~\cite{Pomerance} that was used in~\cite{Tout} to demonstrate a prime labeling of a cycle $C_n$ with~$t$ pendent vertices attached to each vertex.  A special case of their theorem that we will use relates to the bipartite graph with vertex set $X\cup Y$, in which $X=\{1,2,\ldots, n\}$ and $Y=\{2n+1,2n+2,\ldots, 3n\}$ for any integer $n$, where there exists an edge between $x\in X$ and $y\in Y$ if and only if $\gcd\{x,y\}=1$.  A perfect matching was proven to exist on this graph, providing a bijective function $g: X\rightarrow Y$ with $\gcd\{i,g(i)\}=1$ for all $i=1\,\ldots, n$.  The existence of this matching was also utilized by Robertson and Small~\cite{Robertson_Small} to demonstrate that firecrackers have a prime labeling.

\begin{theorem}\label{firecrackers}
The firecracker $F_{n,k}$ has a neighborhood-prime labeling for all $n\geq 1$ and $k\geq 3$.
\end{theorem}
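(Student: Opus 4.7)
The plan is to label the path $u_1, \ldots, u_n$ with $\{1, \ldots, n\}$ using Patel and Shrimali's path labeling from Equation~\eqref{path}, and then distribute the remaining labels $\{n+1, \ldots, nk\}$ between the star centers and the $n(k-2)$ extra leaves so as to satisfy the gcd condition at every non-leaf vertex. The case $n=1$ reduces to a single $k$-star, which is trivially neighborhood-prime (the leaves receive consecutive labels), so I assume throughout that $n \geq 2$.

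After applying Equation~\eqref{path} to the path, the two path-neighbors of any interior vertex $u_i$ carry consecutive integer labels, forcing $\gcd\{f(u_{i-1}), f(u_{i+1})\} = 1$, so $u_i$'s full firecracker neighborhood has gcd $1$ regardless of $f(v_i)$. In addition, $f(u_2)=1$ takes care of $u_1$ automatically. The conditions that remain are: (i) $\gcd(f(u_{n-1}), f(v_n)) = 1$ at the endpoint $u_n$; and (ii) for each star center $v_i$, the gcd of $f(u_i)$ together with the labels of the $k-2$ extra leaves of $v_i$ equals $1$.

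To meet (ii), I split by $k$. For $k \geq 4$, every star has at least two extra leaves, and I assign to star $i$ the consecutive block $\{2n + (i-1)(k-2)+1, \ldots, 2n + i(k-2)\}$ of extra-leaf labels; a consecutive pair in this block already forces the block's gcd to be $1$, and hence $v_i$'s full neighborhood has gcd $1$. For $k=3$, each star has only the single extra leaf $w_i$, so (ii) reduces to $\gcd(f(u_i), f(w_i))=1$; here I invoke the Pomerance--Selfridge matching cited in the paper to obtain a bijection $g : \{1,\ldots,n\} \to \{2n+1,\ldots,3n\}$ with $\gcd(x,g(x))=1$ for every $x$, and then set $f(w_i) = g(f(u_i))$.

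For (i), I reserve for $v_n$ a label in $\{n+1, \ldots, 2n\}$ that is coprime to $f(u_{n-1})$; since $f(u_{n-1})$ equals either $n$ (when $n$ is even) or $(n-1)/2$ (when $n$ is odd), it is at most $n$, and the $n$ consecutive integers in $\{n+1, \ldots, 2n\}$ necessarily contain an element coprime to it. The remaining star-center labels in $\{n+1,\ldots,2n\}$ are distributed in any order. The main obstacle is the $k=3$ case, where with only one extra leaf per star there is no slack for adjusting individual star centers and the whole construction depends on Pomerance--Selfridge producing $n$ coprime pairings at once; what makes things fit together is that conditions (i) and (ii) live in the disjoint label ranges $\{n+1,\ldots,2n\}$ and $\{2n+1,\ldots,3n\}$ respectively, so they can be satisfied independently without interference.
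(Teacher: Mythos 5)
Your proof is correct, and for $k=3$ it follows the same skeleton as the paper's: the Patel--Shrimali labeling \eqref{path} on the path, the Pomerance--Selfridge coprime matching for the single extra leaves, and a specially reserved label for $v_n$ to handle the endpoint $u_n$. You deviate in two places, both of which hold up. First, where the paper invokes Bertrand's Postulate to make $f(v_n)$ a prime in $(n,2n]$ (automatically coprime to $f(u_{n-1})\leq n$), you instead observe that among the $n$ consecutive integers $n+1,\ldots,2n$ there must be one coprime to $f(u_{n-1})$; this is correct (take the one congruent to $1$ modulo $f(u_{n-1})$) and more elementary, and your computation that $f(u_{n-1})$ equals $n$ or $(n-1)/2$ is right. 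Second, for $k\geq 4$ the paper builds $F_{n,k}$ from $F_{n,3}$ by repeatedly attaching pendant vertices via Theorem~\ref{pendent_vertex}, so the coprime matching is still doing the work at every star center; you instead give each center a block of at least two consecutively labeled extra leaves, which settles those neighborhoods directly and confines Pomerance--Selfridge to the genuinely constrained case $k=3$. The paper's route is shorter given that Theorem~\ref{pendent_vertex} is already available; yours isolates more clearly that the matching is only needed when each star has a single extra leaf.
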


\begin{proof}
We first consider the case of $k=3$ by assigning labels using a function $f: V(F_{n,3})\rightarrow \{1,2,\ldots, 3n\}$.  For the vertices along the path $P_n$, we label $u_1,\ldots,u_n$ using the Patel-Shrimali path labeling from Equation~\eqref{path}.  We then choose the label $f(v_n)=p$ where $p$ is a prime with $n+1\leq p\leq 2n$, which is guaranteed to exist for any $n$ by Bertrand's Postulate, a well-known number theory result related to the Prime Number Theorem.  For the vertices $v_1,\ldots,v_{n-1}$, we assign the labels $\{n+1,n+2,\ldots,2n\}\setminus \{p\}$ in any order.  Finally, for each vertex $w_i$, we set $f(w_i)=g(f(u_i))$ using the coprime mapping function $g$ from Pomerance and Selfridge's theorem.  

Let $v$ be an arbitrary vertex in $F_{n,3}$ with degree greater than $1$.  We consider four cases for this vertex $v$ to demonstrate this labeling is neighborhood-prime.
\medskip

\noindent\textbf{Case 1:} If $v=u_1$, then we see that $N(v)=\{v_1,u_2\}$.  We have $f(u_2)=1$ since the second vertex of any path is labeled by 1 according to Equation~\eqref{path}, so the $\gcd$ of this vertex's neighboring labels is $1$.
\medskip

\noindent\textbf{Case 2:} When $v$ is an interior path vertex, or $v=u_i$ for $i=2,\ldots, n-1$, then 
$\gcd\{f(N_{F_{n,3}}(v))\}=\gcd\{f(N_{P_n}(v))\}=1$ since $N_{P_n}(v)\subset N_{F_{n,3}}(v)$, and the labeling on the path $P_n$ is a neighborhood-prime labeling on that subgraph.

\medskip

\noindent\textbf{Case 3:} If $v=u_n$, then $N(v)=\{u_{n-1},v_n\}$.  Since $f(v_n)$ was chosen to be a prime $p$ satisfying $p>n\geq f(u_{n-1})$, the $\gcd$ of these two labels is $1$.

\medskip

\noindent\textbf{Case 4:} For the vertices $v=v_i$ for some $i=1,\ldots, n$, we observe $N(v)=\{u_i,w_i\}$.  By the coprime mapping theorem in~\cite{Pomerance}, $\gcd\{f(u_i),f(w_i)\}=\gcd\{f(u_i),g(f(u_i))\}=1$.

\medskip

We have shown that $F_{n,3}$ has a neighborhood-prime labeling.  By observing that $F_{n,k}$ for $k>3$ can be created from $F_{n,3}$ by attaching $k-3$ pendent vertices to each vertex $v_1,\ldots, v_n$, we have that $F_{n,k}$ is neighborhood-prime for all $k\geq 3$ by Theorem~\ref{pendent_vertex}.

\end{proof}
\begin{figure}
\begin{center}
\includegraphics[scale=.75]{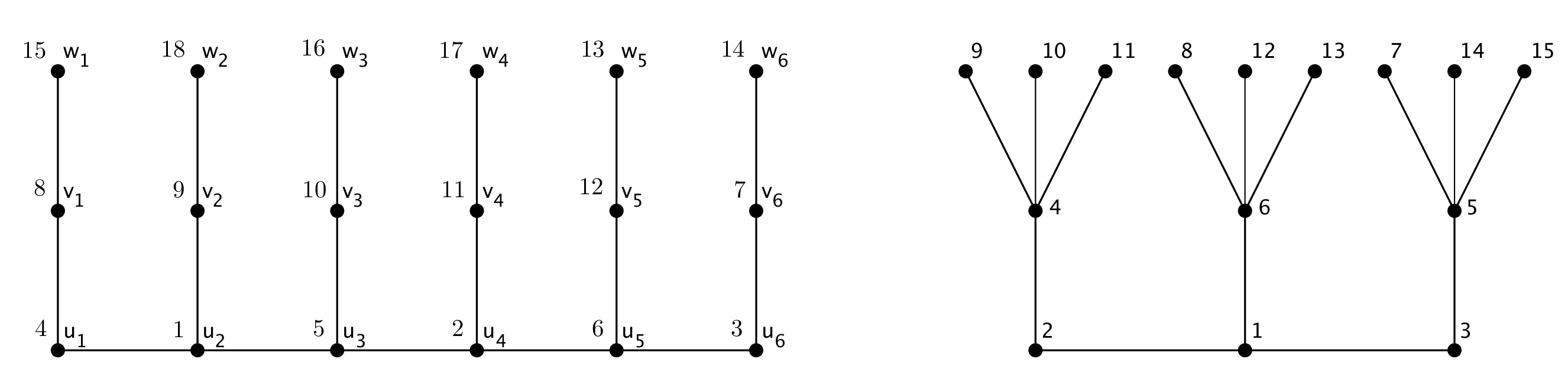}
\caption{Neighborhood-prime labeling of the firecrackers $F_{6,3}$ and $F_{3,5}$}\label{firecracker}
\end{center}
\end{figure}

We now consider a large set of trees that contains multiple classes of well-studied trees. 

\begin{definition}  
{\rm We call a graph $G$ a }\textit{bivalent-free tree} {\rm if $G$ is a tree in which each non-leaf vertex $v$ has $\deg(v)\geq 3$; i.e., no vertices are degree $2$}.
\end{definition}

\begin{theorem}\label{bivalent_free}
A graph $G$ if neighborhood-prime if either of the following conditions are met:
\begin{itemize}
\item $G$ is a bivalent-free tree

\item $G$ is a tree in which all of its degree $2$ vertices can be connected by a path between leaves.

\end{itemize}
\end{theorem}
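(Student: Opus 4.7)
My plan is to handle each condition separately, using the path labeling of Patel and Shrimali from Equation~\eqref{path} together with the pendant-edge theorem (Theorem~\ref{pendent_vertex}) as the two main tools. In both cases the natural backbone is a leaf-to-leaf path of $T$: the distinguished one containing all degree-$2$ vertices in condition~2, and any convenient one in condition~1.

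For condition~2, I would let $P$ be the leaf-to-leaf path of $T$ containing every degree-$2$ vertex and label it by Equation~\eqref{path}, which is already known to be neighborhood-prime on a path. Every off-$P$ vertex then has $T$-degree $1$ or $\geq 3$, and every interior vertex of $P$ already has degree $\geq 2$, so any off-$P$ vertex adjacent to $P$ can be attached at once via Theorem~\ref{pendent_vertex}. The off-$P$ vertices of $T$-degree $\geq 3$ cause a sequencing issue: the first time such a vertex $u$ is attached it has only one neighbor in the partial graph and cannot yet host pendants of its own. The plan is to detach each off-$P$ subtree and label it as a smaller instance of the same theorem, reusing the label $1$ as an anchor inside that subtree so that the recursive call starts from a configuration satisfying the hypothesis of Theorem~\ref{pendent_vertex}.

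For condition~1, every non-leaf has degree $\geq 3$, and the degree-sum identity then forces more than half of the vertices to be leaves. I would assign the smallest labels $\{1,2,\dots,\ell\}$ to the leaves in an order calibrated so that each non-leaf $v$ with at least two leaf neighbors receives two leaf neighbors whose labels are consecutive integers, immediately forcing $\gcd\{f(N(v))\} = 1$. A non-leaf with fewer than two leaf neighbors must have at least two non-leaf neighbors, and the remaining labels are placed on the non-leaf skeleton $T'$ (the subtree induced by non-leaves) by induction on $|V(T')|$ so that each such vertex's non-leaf neighborhood is coprime.

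I expect the main obstacle to be the ``deep'' non-leaves of condition~1 whose neighborhoods contain few or no leaves. Simple strategies fail here: a straight DFS-order labeling from a leaf can produce a degree-$3$ vertex $v$ whose three neighbors carry labels $\{2,4,c\}$ with $c$ even (parent at $2$, first child at $4$, second child preceded by an even-size subtree), giving $\gcd = 2$. Handling all such vertices simultaneously will require careful ordering of the subtree traversal together with selective label swaps, and this is where I expect most of the work of the proof to lie.
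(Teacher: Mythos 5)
There is a genuine gap. The key idea you are missing is the paper's decomposition of the tree into vertex-disjoint leaf-to-leaf paths $P_1,\dots,P_\ell$ chosen so that \emph{every} non-leaf vertex of $G$ is an \emph{interior} vertex of some $P_k$. This is possible precisely because every non-leaf off the already-built structure has degree at least $3$: one edge attaches it to an earlier path, leaving at least two edges along which to extend down to two distinct leaves (and a second attachment edge would create a cycle). Each $P_k$ is then labeled with a block of consecutive integers using a shifted version of Equation~\eqref{path} (the paper's Equation~\eqref{shifted_path}), which still places consecutive integer labels on the two path-neighbors of every interior vertex; leftover leaves absorb the remaining labels arbitrarily. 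This single mechanism disposes of exactly the ``deep'' non-leaves that your condition-1 argument leaves unresolved --- you explicitly acknowledge that vertices with fewer than two leaf neighbors are an open obstacle and that ``this is where most of the work lies,'' so your proof of condition 1 is not complete. Your leaf-first strategy also has no mechanism for a non-leaf all of whose neighbors are non-leaves (e.g., the root of a perfect ternary tree of depth $3$).

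Your treatment of condition 2 also does not go through as stated. The recursive plan of detaching each off-path subtree and ``reusing the label $1$ as an anchor'' conflicts with the requirement that $f$ be a bijection onto $\{1,\dots,n\}$: the label $1$ is already spent on $P$, and a recursive call producing labels $1,\dots,|T'|$ on a subtree $T'$ cannot be merged with the labels on $P$ without a shift, after which the anchor property you are relying on disappears. Moreover, even if the sequencing of Theorem~\ref{pendent_vertex} were repaired, you would still need to verify the gcd condition at the root of each attached subtree, whose neighborhood mixes a path vertex with its subtree children; nothing in your outline controls those labels. The paper avoids all of this by noting that once $P_1$ is forced to contain every degree-$2$ vertex, all remaining non-leaves have degree at least $3$ and the path-decomposition argument of the first case applies verbatim.
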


\begin{proof}
First assume $G$ be a bivalent-free tree of order $n$, and consider a non-leaf vertex $v_1$ of $G$.  There exists a path $P_1$ from a leaf $u_1$ to another leaf $w_1$ that includes $v_1$ as an interior vertex.  We label this path $P_1$ using the Patel-Shrimali labeling defined in Equation~\eqref{path}.  If all vertices not on the path are leaves, then $G$ is a caterpillar that can be labeled according to Theorem~\ref{caterpillars}.  Otherwise, since each non-leaf vertex has at least $3$ neighbors, there is an interior vertex in $P_1$ with a non-leaf neighbor that lies outside the path, which we will call $v_2$.  With this vertex having at least two neighbors not lying on the path $P_1$ since it can only be adjacent to one vertex on the path to avoid a cycle, we can find another path $P_2$ connecting leaves $u_2$ and $w_2$ that passes through the vertex $v_2$.  We label this path using the following shifted path-labeling function, where we set $N=|P_1|$ and $m=|P_2|$ and consider the vertices $u_2,\ldots,v_2,\ldots, w_2$ of the path $P_2$ as $x_1,\ldots, x_{|P_{2}|}$:
\begin{equation}\label{shifted_path}
f_N(x_{i})=\begin{cases}
\text{  }N+\lfloor\frac{m}{2}\rfloor+\frac{i+1}{2} \hspace{.6cm}\text{if $i$ is odd}\\
\text{  }N+\frac{i}{2} \hspace{2cm}\text{if $i$ is even}\\
\end{cases}
\end{equation}
Note that this shifted labeling function still results in an interior vertex's neighbors within the path being assigned consecutive integers as their labels.

At this point, there may be several non-leaf vertices not contained in paths $P_1$ and $P_2$ that are adjacent to an interior vertex in one of those paths.  We add these vertices to the sequence of vertices $\{v_i\}$ in no particular order, and then continue this process of creating a path $P_i$ through $v_i$ from a leaf $u_i$ to another leaf $w_i$.  These paths exist since the degree of each $v_i$ is at least $3$ and because we claim only one edge from $v_i$ has been used to connect to a vertex on a path $P_k$ for some $k<i$.  If, on the other hand, a vertex $v_i$ was adjacent to vertices $y\in P_{k_1}$ and $z\in P_{k_2}$, then there would be a path from $y$ to $z$ since the subgraph induced by all vertices in $P_1,\ldots, P_{i-1}$ is connected.  The assumption that $v_i$ is adjacent to both of these vertices would result in a cycle, contradicting the fact that $G$ is a tree, and demonstrating our claim to be true.

For each path $P_i$ on the vertices $u_i,\ldots, v_i,\ldots, w_i$, we label the vertices using the shifted path-labeling function in Equation~\eqref{shifted_path} with $N=|P_1|+|P_2|+\cdots+|P_{i-1}|$ and $m=|P_{i}|$.  We continue creating these paths, adding additional vertices to the sequence $\{v_i\}$ which are non-leaves that are adjacent to any interior vertex of $P_i$, until the only vertices adjacent to the paths $P_1,\ldots, P_{\ell}$ are leaves.  Finally, we label these leaves that do not lie on any of our paths $P_i$ with the remaining labels between $|P_1|+\cdots +|P_{\ell}|+1$ and $n$ in any order.  See Figure~\ref{bivalentfree} for one possible result of this process of labeling a bivalent-free tree with the paths signified by the dashed edges.

\begin{figure}\label{bivalentfree}
\begin{center}
\includegraphics[scale=.95]{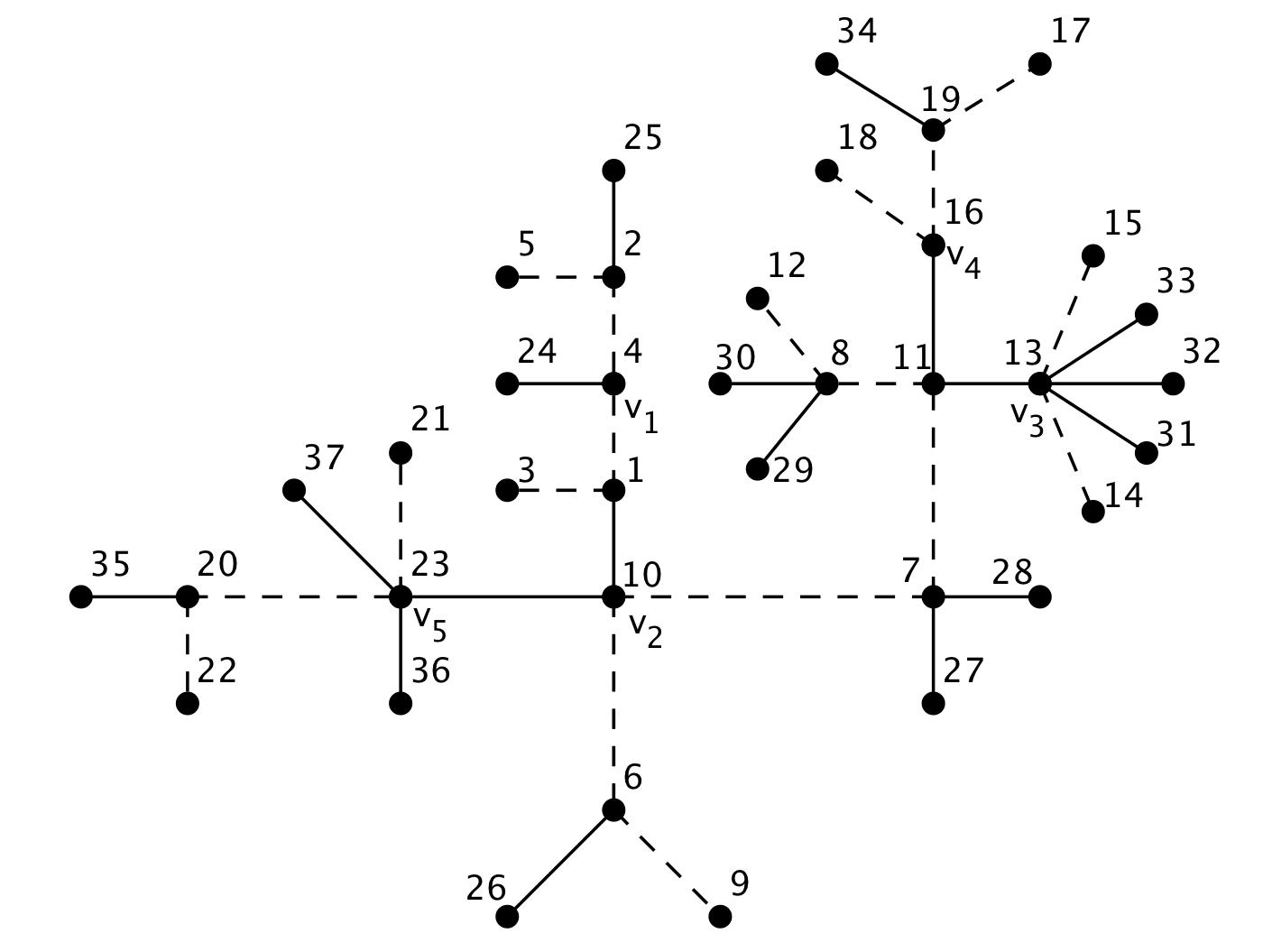}
\caption{Neighborhood-prime labeling of a bivalent-free tree}
\end{center}
\end{figure}

To show that the labeling is neighborhood-prime, consider a vertex $v\in V(G)$ with $\deg(v)>1$.  This vertex $v$ is an interior vertex of some path $P_k$, since the endpoints of each path are leaves.  Then $N(v)$ contains two vertices from the path $P_k$ that are labeled by consecutive integers.  Thus, $\gcd\{f(N(v))\}=1$ for all non-leaf vertices $v$, proving our result for bivalent-free trees.

In the case of $G$ having one or more degree $2$ vertices that can be connected by a single path from a leaf $u_1$ to another leaf $v_1$, we consider this path to be the path $P_1$ as described in the bivalent-free case.  If $G$ is simply a path or a caterpillar, the neighborhood-prime labeling is known to exist.  If it is not, then there must be a non-leaf adjacent to an interior vertex on $P_1$ to select as $v_2$, and its degree is at least $3$ since all degree $2$ vertices were on the path.  We continue from this point as in the previous case since all non-leaf vertices in $V(G)\setminus V(P_1)$ are at least degree $3$, resulting in a neighborhood-prime labeling in our second case.
\end{proof}

One class of trees that qualifies as a bivalent-free tree is a \textit{full} $k$-\textit{ary tree} with $k\geq 3$, which is a rooted tree in which each node is either a leaf or has $k$ children.  Additionally, $k$-\textit{Cayley trees} are bivalent-free if $k\geq 3$, where this class consists of all trees in which the non-leaf vertices have exactly degree $k$.  The lack of any bivalent vertices in these graphs leads Theorem~\ref{bivalent_free} to directly prove the following.

\begin{corollary}
Full $k$-ary trees and $k$-Cayley trees are neighborhood-prime when $k\geq 3$.
\end{corollary}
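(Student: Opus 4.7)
The plan is straightforward: observe that both classes of trees are bivalent-free in the sense of the definition just given, and then invoke Theorem~\ref{bivalent_free} directly. The only substantive work is verifying, in each case, that no non-leaf vertex can have degree exactly $2$; once that is done, the bulk of the argument has already been carried out in Theorem~\ref{bivalent_free}.

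For a full $k$-ary tree with $k\geq 3$, every non-leaf vertex is either the root or an internal non-root node. The root (if it is not itself a leaf) has exactly $k$ children and no parent, giving degree $k\geq 3$. Every non-root internal vertex has exactly $k$ children and one parent, giving degree $k+1\geq 4$. Thus every non-leaf vertex has degree at least $3$, so the tree is bivalent-free. For a $k$-Cayley tree with $k\geq 3$, the defining property is that each non-leaf vertex has degree exactly $k$, which is again at least $3$ by hypothesis, so the tree is bivalent-free.

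Applying the first bullet of Theorem~\ref{bivalent_free} to each of these trees yields a neighborhood-prime labeling, proving the corollary. The only conceivable pitfall is a degenerate case (for example a single-vertex or two-vertex tree), but under the standing convention that the classes being discussed contain at least one non-leaf vertex, this does not arise, and the degree counts above handle every remaining case uniformly. Hence no genuine obstacle appears, and the corollary follows essentially by unpacking the definitions.
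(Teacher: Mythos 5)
Your proposal is correct and matches the paper's argument exactly: the paper likewise observes that both classes have no degree-$2$ non-leaf vertices when $k\geq 3$ and then applies the bivalent-free case of Theorem~\ref{bivalent_free}. Your explicit degree counts (root of degree $k$, internal vertices of degree $k+1$) simply spell out what the paper leaves implicit.
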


In addition to proving the existence of a neighborhood-prime labeling of bivalent-free trees such as $k$-Cayley trees and $k$-ary trees, the second case of Theorem~\ref{bivalent_free} provides a neighborhood-prime labeling of further classes of graphs.  First, caterpillars fit the criteria for this case, although proving the existence of their neighborhood-prime labeling separately served as a base case for this theorem.  Firecrackers in which $k>3$ also contain a path from a leaf of the first star to a leaf on the last that would contain all degree $2$ vertices, although the $k=3$ case would not have such a path.  

A third class of graphs that have not been examined yet, but have a neighborhood-prime labeling using Theorem~\ref{bivalent_free} involve binary trees, which are rooted trees with each node having at most $2$ children.  More specifically, a \textit{full binary tree} is a binary tree with each node having either $0$ or $2$ children.  Furthermore, a \textit{complete binary tree}, as shown in Figure~\ref{comp_bin_tree}, has each level completely filled except possibly the bottom level, in which all nodes are as far left as possible.  Fu and Huang~\cite{F_H} proved that complete binary trees are prime, although their definition of complete is usually referred to as a perfect binary tree, a subclass of complete binary trees in which all levels are filled.  

\begin{figure}
\begin{center}
\includegraphics[scale=1]{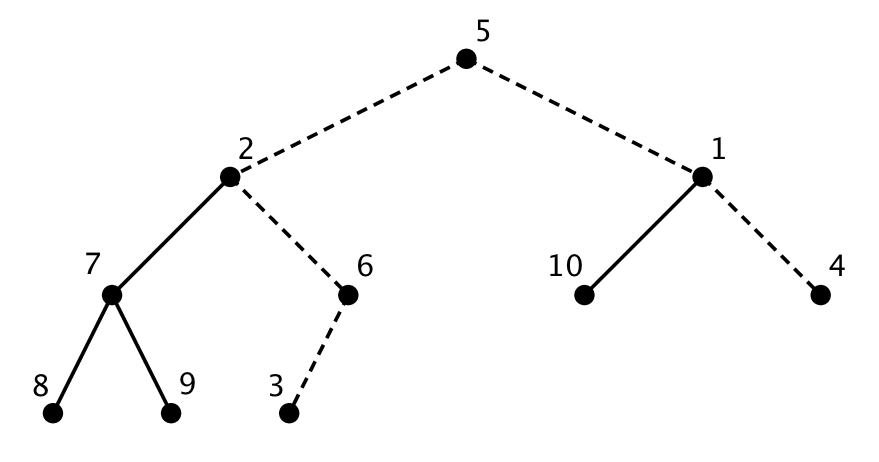}
\caption{Neighborhood-prime labeling of a complete binary tree}\label{comp_bin_tree}
\end{center}
\end{figure}

\begin{corollary}
Full binary trees and complete binary trees have neighborhood-prime labelings.
\end{corollary}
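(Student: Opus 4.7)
The plan is to invoke Theorem~\ref{bivalent_free} for both classes, specifically the second condition, which permits degree $2$ vertices provided they can all be placed simultaneously on a single path joining two leaves. Neither class of binary tree is bivalent-free once it is nontrivial, since the root of any binary tree has only its two children as neighbors.

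For a full binary tree $T$ on at least three vertices, every internal vertex other than the root has two children and one parent, hence degree $3$. Thus the root is the unique degree-$2$ vertex. Since the root has two children and each of the resulting subtrees contains at least one leaf, any path linking a leaf in the left subtree to a leaf in the right subtree passes through the root, and Theorem~\ref{bivalent_free} applies. The trivial cases of one or two vertices reduce to $P_1$ or $P_2$, both already known to be neighborhood-prime.

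For a complete binary tree $T$ with $n$ vertices, I would use the standard level-order indexing in which node $k$ has children $2k$ and $2k+1$ whenever these indices are at most $n$. Every internal node above the deepest internal level has both children; by the left-to-right completeness convention at most one node at the deepest internal level has only a single (left) child, which happens precisely when $n$ is even, in which case it is node $n/2$ with child $n$. Consequently the degree-$2$ candidates are the root and, when $n$ is even, node $n/2$. I would then exhibit one leaf-to-leaf path containing both: start at leaf $n$, ascend through the ancestor chain $n/2, \lfloor n/4 \rfloor, \ldots, 1$ up to the root, and descend from the root through whichever child does not lie on this ascending path, continuing down to any leaf in that opposite subtree. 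For $n \geq 4$, both children of the root exist and each has a leaf descendant, so this descent is always possible. The small cases $n \leq 3$ collapse to $P_n$.

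The main obstacle is identifying the degree-$2$ vertices of a complete binary tree and confirming that all of them lie on a single leaf-to-leaf path. The identification hinges on the level-order structure together with the left-fill completeness property, which together force at most two degree-$2$ vertices. Once this is in place, containment on the constructed path is immediate: the root is the apex of the path by construction, and node $n/2$, when present, is the first step upward from leaf $n$ by its definition as $n$'s parent. Theorem~\ref{bivalent_free} then delivers the neighborhood-prime labeling in both cases.
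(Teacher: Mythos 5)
Your proposal is correct and follows essentially the same route as the paper: both invoke the second case of Theorem~\ref{bivalent_free}, identifying the root (and, for complete binary trees, the possible single-child vertex in the penultimate level) as the only degree-$2$ vertices and placing them on one leaf-to-leaf path through the root. Your level-order indexing argument just makes explicit the identification of degree-$2$ vertices that the paper asserts by inspection.
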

\begin{proof}
We apply the second case of Theorem~\ref{bivalent_free} since full binary trees only have the root as a degree $2$ vertex, which can easily be contained in a path between leaves.  For complete binary trees, the root and potentially one vertex in the penultimate row, again seen in Figure~\ref{comp_bin_tree}, have degree $2$.  One can again find a path from a leaf that passes through the root and this other vertex, if it exists, to its only child, as shown by the dashed edges in the figure.
\end{proof}

In addition to the previous result proving the existence of a neighborhood-prime labeling for these particular classes of binary trees, one can find a more explicit labeling.  Particularly for the full binary tree, simply label the root with $1$ and proceed by labeling in order from left to right across each level.  Each non-leaf will have its two children being labeled by consecutive integers.

Based on the wide variety of classes of trees with neighborhood-prime labelings, we conclude with the following conjecture. 

\begin{conjecture}
All trees are neighborhood-prime.
\end{conjecture}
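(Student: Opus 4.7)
The plan is to proceed by strong induction on $|V(T)|$, combining the leaf-peeling idea of Theorem~\ref{pendent_vertex}, the Patel--Shrimali path labeling of Equation~\eqref{path}, and the coprime matching of Pomerance and Selfridge used in Theorem~\ref{firecrackers}. The strategy splits into two cases depending on the local structure at the leaves of $T$.

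If $T$ has a leaf $u$ whose unique neighbor $v$ satisfies $\deg_T(v)\ge 3$, I would remove $u$ and apply induction to $T-u$, in which $v$ remains internal. The resulting neighborhood-prime labeling $f'$ of $T-u$ extends to $T$ by setting $f(u)=n$: the only neighborhood that changes is $v$'s, and $\gcd\{f'(N_{T-u}(v))\}=1$ is preserved after adjoining $n$, while every other internal vertex's neighborhood is unchanged. If no such leaf exists, then every leaf sits at the end of a length-$2$ pendant $u$--$v$--$w$ with $\deg_T(v)=2$. In this second case I would peel off the pair $\{u,v\}$ (or, when $w$ has many such pendants, all of them simultaneously), apply induction to the reduced tree $T'=T-\{u,v\}$, and then reassign the freed labels so as to force $\gcd\{f(u),f(w)\}=1$. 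When $\deg_T(w)\ge 3$, the constraint on $w$'s own neighborhood is automatically preserved since $\gcd\{f'(N_{T'}(w))\}=1$ by induction and we are merely adjoining the extra label $f(v)$; when $\deg_T(w)=2$, one iterates the peeling further along the chain of degree-$2$ vertices.

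The main obstacle is that, for a given labeling $f'$, it can happen that neither of the freed labels $n-1,n$ is coprime to $f'(w)$: for instance $n=10$ with $f'(w)=6$ gives $\gcd\{9,6\}=3$ and $\gcd\{10,6\}=2$, so the naive extension fails. The fix is to strengthen the inductive hypothesis so that we have flexibility in the label of the designated vertex $w$ within the reduced tree. Concretely, I would aim to prove the sharper statement: for every tree $T'$ and every specified vertex $w\in V(T')$, there is a neighborhood-prime labeling of $T'$ whose value at $w$ lies in any prescribed coprime class---an assertion parallel in spirit to the bipartite coprime matchings of Pomerance and Selfridge. A promising route to the strengthened statement exploits the many symmetries and reorderings of the Patel--Shrimali path labeling (for example, the labeling of $P_{8}$ reversed from end to end sends $f'(v_3)=6$ to $f'(v_3)=3$, fixing the obstruction above), supplemented by Bertrand's postulate to guarantee a usable prime in the shifted label range of each newly attached path and by a Hall-type matching argument when several pendants must be labeled simultaneously. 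An alternative global construction would extend Theorem~\ref{bivalent_free} by decomposing $T$ into edge-disjoint paths such that every internal vertex is interior to at least one path, labeling each path with a shifted Patel--Shrimali scheme, and invoking a coprime matching to fix the first label of each newly attached path. Regardless of which route is taken, I expect the coprime-compatibility condition at the degree-$2$ junctions---and the need to prescribe labels at designated vertices---to be the principal technical difficulty of the proof.
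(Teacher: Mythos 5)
This statement is stated in the paper only as a conjecture; the paper offers no proof of it, so there is nothing to compare your argument against except its own internal soundness. As written, your proposal is not a proof. The first half is fine: deleting a leaf $u$ whose neighbor $v$ has degree at least $3$, labeling $T-u$ inductively, and assigning $u$ the label $n$ is exactly the mechanism of Theorem~\ref{pendent_vertex} run in reverse, and it is correct. But the entire difficulty of the conjecture is concentrated in your second case, and there you identify the obstruction (neither freed label need be coprime to $f'(w)$ at a degree-$2$ junction) without resolving it. The ``fix'' you propose --- that every tree admits a neighborhood-prime labeling in which the label of a designated vertex $w$ can be forced into a prescribed coprime class --- is a substantially \emph{stronger} statement than the conjecture itself, and you give no argument for it beyond an analogy with the Pomerance--Selfridge matching and an appeal to symmetries of the labeling in Equation~\eqref{path}. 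Replacing an open statement by a harder open statement is not an induction; the base of your strengthened hypothesis is never established, and the single example of reversing the labeling of $P_8$ does not indicate how to control the residue of $f'(w)$ in general.

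There is also a concrete failure inside the second case as stated. When every neighbor of $w$ is the middle vertex of a pendant path of length $2$ (for example, a spider all of whose legs are $P_2$'s attached at the center), peeling all such pendants simultaneously leaves $w$ isolated or of degree at most $1$ in $T'$, so the inductive hypothesis imposes \emph{no} condition on $N_{T'}(w)$; your claim that $\gcd\{f'(N_{T'}(w))\}=1$ is ``automatically preserved'' is then vacuous, and you must separately arrange that the new labels placed on the peeled middle vertices have gcd $1$ while simultaneously satisfying the coprimality constraints $\gcd\{f(u_i),f(w)\}=1$ at each pendant. This is a genuine system of simultaneous coprimality conditions, of exactly the kind that makes the conjecture hard, and neither the Hall-type argument nor Bertrand's postulate is developed far enough here to discharge it. The proposal is a reasonable research plan --- it correctly isolates where the difficulty lives --- but it does not prove the conjecture.
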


\newcommand{\journal}[6]{{#1,} {#2}, {\it #3} {\bf #4} (#5) #6.}
\newcommand{\dissertation}[4]{{#1,} #2, {\it #3,} #4}
\newcommand{\book}[5]{{#1,} {\it #2,} #3, #4.}

\end{document}